\documentclass[10pt,a4paper]{article}

\usepackage{tikz}
\usepackage{amssymb,amsfonts,amsmath}
\usepackage[english]{babel}
\usepackage{amsthm}
\usepackage{graphicx}
\usepackage{caption}
\usepackage{subcaption}
\usepackage[]{algorithm2e}
\usepackage{authblk}
\usepackage{cite}
\usepackage{float}
\usepackage{pgfplots}
\pgfplotsset{ignore legend/.style={every axis legend/.code={}}}
\pgfplotsset{compat=newest}
\usetikzlibrary{plotmarks}
\usepackage{grffile}

\usepgfplotslibrary{external}
\tikzsetfigurename{tikzpdf_}
\tikzexternalize
\tikzset{external/optimize command away=\AddToShipoutPicture}
\tikzset{external/system call={lualatex \tikzexternalcheckshellescape -halt-on-error
        -interaction=batchmode -jobname "\image" "\texsource"}}

\newcommand{\mb}[1]{\mathbf{#1}}
\newtheorem{theorem}{Theorem}[section]
\newtheorem{definition}[theorem]{Definition}
\newtheorem{example}[theorem]{Example}
\newtheorem{corollary}[theorem]{Corollary}
\newtheorem{remark}[theorem]{Remark}
\newtheorem{proposition}[theorem]{Proposition}

\numberwithin{equation}{section}

\usepackage{a4wide}

\begin{document}
\title{On a fast Arnoldi method for $BML$-matrices}
\author{Bernhard Beckermann, Clara Mertens, and Raf Vandebril}

\date{}

\maketitle

\abstract{Matrices whose adjoint is a low rank perturbation of a rational function of the matrix naturally arise when trying to extend the well known Faber-Manteuffel theorem \cite{FaMa84, FaLieTi}, which
provides necessary and sufficient conditions for the existence of a short Arnoldi recurrence.
We show that an orthonormal Krylov basis for this class of matrices can be generated by a short recurrence relation based on GMRES residual vectors. These residual vectors are computed
by means of an updating formula. Furthermore, the underlying Hessenberg matrix has an accompanying low rank structure, which we will investigate closely.}

\section{Introduction}

In this article we will discuss a new variant of the Arnoldi method applied to a class of sparse matrices $A\in \mathbb C^{n\times n}$
which allows to compute the first $k$ Arnoldi
vectors in complexity $\mathcal O(kn)$. 
We will refer to this class of matrices as \textit{BML-matrices}, following the fundamental work of
Barth \& Manteuffel \cite{barthmanteuffel}
and Liesen \cite{Liesen} on matrices $A$ whose adjoint is a low rank perturbation of a rational function of $A$. 
More specifically, we assume that
\begin{equation} A^\ast = p(A)q(A)^{-1} + FG^\ast, \label{eq:introeq} \end{equation}
with $p, q$ polynomials of degree $m_1$ and $m_2$, respectively, and
\[F = [\mb f_1, \mb f_2, \ldots, \mb f_{m_3}], \quad G=[\mb g_1, \mb g_2, \ldots, \mb g_{m_3}] \in \mathbb C^{n \times m_3} \]
matrices of full column rank. Moreover, it is assumed that the roots of $q$ are simple. By taking $p(z)/q(z)\in \{z,1/z\}$, we see that Hermitian matrices and unitary matrices
are $BML$-matrices, and the same is true for low rank perturbations of such matrices. Furthermore, if $FG^\ast = 0$ in \eqref{eq:introeq}, the matrix $A$ is normal \cite{Liesen}.
In what follows we suppose that
$m_j \ll n$, since these quantities (as well as the sparsity pattern of $A$) are hidden in the constant of the above-claimed complexity result.

After $k$ steps of the Arnoldi process with initial vector $\mb{b}$ one obtains the expression
 \begin{equation} AV_k = V_k H_k + h_{k+1,k} \mb{v}_{k+1} \mb{e}_k^\ast, \label{eq:arnoldistep} \end{equation}
 with $V_k = [\mb{v}_1, \mb{v}_2, \ldots, \mb{v}_k]$, $\mb{v}_1=\mb{b}/\|\mb{b}\|$, $\mb{v}_{k+1}\in \mathbb{C}^n$ and $h_{k+1,k} \geq 0$ satisfying $V_k^\ast V_k = I_k$
 and $V_k^\ast \mb{v}_{k+1} =0$. The matrix $H_k$ is upper Hessenberg.

A fast variant of the Arnoldi process will exploit additional structure of the upper Hessenberg matrix $H_k$. For example, to compute the successive vectors $\mb v_k$ for Hermitian $A$ we get a tridiagonal $H_k$
and the Arnoldi process reduces to the Lanczos method. For matrices $A$ satisfying \eqref{eq:introeq}, $H_k$ turns out to be a rank structured
matrix.
In order to specify this statement, the following definition is introduced.

\begin{definition}\label{def:upper_separable}
   We say that a matrix $B\in \mathbb C^{n\times n}$ is $(r,s)-$upper-separable with $ s \geq 0$,
   if for all $j=1, 2, \ldots, n-|r|$ it holds that (in Matlab notation)
   \[
       \mbox{rank~} B(1:j-\min (r,0),j+\max (r,0):n)\leq s.
   \] In other words, any submatrix of $B$ including elements on and above the $r$th diagonal of $B$ is of rank at most $s$. The $0$th diagonal corresponds to the main
   diagonal, while the $r$th diagonal refers to the $r$th superdiagonal if $r >0$ and to the $-r$th subdiagonal if $r<0$.
\end{definition}

Before proceeding, we give some comments on Definition~\ref{def:upper_separable} whose formulation is inspired by some related well-established definitions. For example, matrices with both $B$ and $B^*$
being $(0,1)$--upper-separable (and $(1,1)$--upper-separable, respectively) are referred to as semiseparable matrices (and quasi separable, respectively) \cite[\S1.1, \S 9.3.1]{b163}.
As a simple example, a tridiagonal matrix is quasi separable, and its inverse is known to be semiseparable. Matrices being $(1-s,s)$--upper-separable with their adjoint being $(1-r,r)$--upper-separable are usually
called $(r,s)-$semiseparable, while matrices being $(1,s)$--upper-separable with their adjoint being $(1,r)$--upper-separable are
called $(r,s)-$quasi separable \cite[\S8.2.2 and \S8.2.3]{b163}.

The above statement on the rank structure of $H_k$ can now be made exact. Assume the Arnoldi process breaks down after $N \leq n$ iterations, i.e., $\mb{v}_{N+1} = 0$ in \eqref{eq:arnoldistep}. We will refer
to $N$ as the `Arnoldi termination index' in the rest of the article. In
Corollary \ref{cor:semisep3} it is shown that for a $BML$-matrix $A$, the underlying Hessenberg matrix $H_N$ is
$(r,s)-$upper-separable, where $r$ and $s$ are functions of $m_j$. However, for a general matrix $A$, there is no reason for the underlying Hessenberg matrix to be upper-separable.
The upper-separable structure of the underlying Hessenberg matrix gives rise to the design of
a \textit{multiple recurrence relation} \cite{barthmanteuffel,mertensvandebril}, signifying that each new Arnoldi vector can be written as
\begin{equation}
 \mb{v}_{k+1} = \sum_{j=k-m_3-m_2}^k \alpha_{k,j}A \mb{v}_j + \sum_{j=k-m_3-m_1}^k \beta_{k,j} \mb{v}_j. \label{eq:multi}
\end{equation}
The smaller the quantities $m_j$ in \eqref{eq:introeq}, the shorter the recurrence relation becomes. In \cite{bella} the same recurrence relations are derived for the class of so called
$(H,m)$-well-free matrices. We refer to Remark \ref{remark:wellfree} for some more details.

In this article we investigate a different version of the recurrence relation \eqref{eq:multi} by rewriting it in terms of GMRES residual vectors, aiming to overcome some
of the numerical problems which relation \eqref{eq:multi} entails, such as the possibility of a breakdown \cite{mertensvandebril}. It will be shown
how these GMRES residual vectors can be computed progressively by means of an updating formula.
This partially extends the discussion on a progressive GMRES method for nearly Hermitian matrices as presented by Beckermann \& Reichel \cite{BeckReich}.

The article is organized as follows. Section \ref{sec:proggmres} describes the structure of the unitary factor $Q$ in the $QR$-decomposition of a Hessenberg matrix in terms of orthogonal polynomials
(the results in this section are valid for a general matrix $A \in \mathbb{C}^{n \times n}$).
It is well known that this unitary factor can be represented as a product of Givens rotations, see e.g.,
the isometric Arnoldi process introduced by Gragg \cite{m281} and its extension to the class of shifted unitary matrices by Jagels \& Reichel \cite{JaRe94}. We will
describe these Givens rotations by means of orthogonal polynomials.
Furthermore, links between orthogonal polynomials and the GMRES algorithm are discussed, leading to an updating
formula to compute the GMRES residuals progressively.
In section \ref{sec:rec} the upper-separable structure of the Hessenberg matrix related to the Arnoldi process applied to a $BML$-matrix is investigated. It
is shown how this upper-separable structure can be generated by the GMRES residual vectors, allowing to construct a short recurrence relation and an
accompanying algorithm. In section \ref{sec:barthmanteuffel} we compare our findings with those presented in \cite{barthmanteuffel}. Section \ref{sec:unitary} discusses
some computational reductions that can be made in case the matrix $A$ is nearly unitary or nearly shifted unitary.
Finally, section \ref{sec:num} discusses the numerical performance and stability of the algorithm.

 Throughout this article we will make use of the following notation.
Vectors are written in
bold face lower case letters, e.g., $\mb{x},\mb{y}$, and $\mb{z}$. The vector $\mb{e}_k$ denotes the $k$th column of an identity matrix of
applicable order. The standard inner product is denoted as $\langle \mb{x},\mb{y} \rangle = \mb{y}^\ast \mb{x}$, with $\cdot^\ast$ the Hermitian conjugate.
We write $\| \cdot \|$ for the induced Euclidean norm as well as the subordinate spectral matrix norm. Matrices are denoted by upper case
letters $A=(a_{ij})$, and $I_k$ denotes the identity matrix of order $k$.
We will frequently express formula \eqref{eq:arnoldistep} in the form
\begin{equation*}
   AV_k = V_{k+1}\underline{H}_k, \quad \text{with}\quad V_{k+1} := [V_k,\mb{v}_{k+1}],
\end{equation*}
where
\[
   \underline{I}_k
   : =  \left[\begin{array}{c}
    I_k \\
                                                            \mb{0}
                                                           \end{array}\right] \in \mathbb{C}^{(k+1) \times k}
    , \quad
    \underline{H}_k := \left[\begin{array}{c}
                                                         H_k \\
                                                         h_{k+1,k}\mb{e}_k^\ast
                                                        \end{array}\right] = H_{k+1} \underline I_k \in \mathbb{C}^{(k+1) \times k}
    ,
\]
and $V_k = V_{k+1} \underline I_k$,
revealing the nested structure of the Arnoldi matrices $V_k$ and $H_k$.

\section{Towards a progressive GMRES method} \label{sec:proggmres}
Many Krylov space methods and in particular the Arnoldi method can be described in polynomial language which reveals some particular properties, and makes
the link to the rich theory of orthogonal polynomials. For example, from \eqref{eq:arnoldistep} one deduces by recurrence on the degree that, for any polynomial $p$
 \begin{equation} \label{eq:exactness}
     p(A) \mb v_1 = V_k p(H_k) \mb e_1, \quad \mbox{provided that $\deg p < k$},
\end{equation}
illustrating that Krylov spaces are intimately related to polynomials. See e.g., \cite[\S6.6.2]{saad} for the case of Hermitian $A$. However, polynomial language can
be used also for general matrices \cite[\S1.3]{Beck}.
In \S \ref{subsec:OP} we will see that the Arnoldi vectors correspond to a (finite) family of polynomials which are orthogonal with respect to
\begin{equation} \label{eq:scalar_product}
\langle p, q \rangle_{A,\mb v_1} = \langle p(A)\mb{v}_1, q(A)\mb{v}_1 \rangle = \left( q(A)\mb{v}_1\right)^\ast p(A)\mb{v}_1,
\end{equation}
a scalar product on the set of polynomials of degree $<N$, with $N$ the termination index of the Arnoldi method. The scalar product \eqref{eq:scalar_product} induces a norm $\| \cdot \|_{A,\mb v_1}$.
This implies in particular the well known fact \cite[Proposition~6.7]{saad} that Arnoldi vectors are normalized FOM residuals.
In \S \ref{subsec:QR} we will use polynomial language to give an explicit expression for the $Q$-factor in a $QR$-decomposition of an upper Hessenberg matrix. Such a formula for a unitary upper
Hessenberg matrix is not known in literature, though of course there is a close link with Gragg's explicit formula in terms of Givens rotations \cite[\S6.5.3]{saad,m281}.
This formula will enable us to deduce in Corollary \ref{cor:semisep} a decay property of the entries of $Q$ far
from the main diagonal, and reveals immediately a rank structure for $Q$. In \S\ref{subsec:GMRES} we recall that GMRES residuals can be expressed in terms of orthogonal polynomials: we are faced
with a well-studied extremal problem for general orthogonal polynomials. This allows us in \S \ref{sec:prog} to establish a well known link between (normalized) FOM and GMRES
residuals \cite[\S6.5.5]{saad}, allowing for a recursive computation of (normalized) GMRES residuals. Such a progressive GMRES implementation has been discussed before
\cite[\S6.5.3]{saad} and \cite{BeckReich}. The implementation presented in this article is inspired by the work of Beckermann \& Reichel \cite{BeckReich}. Both implementations make use
of the decomposition of $Q$ into a product of Givens rotations, but differ in how to find the angles of these rotations. We will consider in \S \ref{sec:prog} not only FOM and GMRES for systems of
linear equations $A\mb x=\mb b$  but more generally for shifted systems $(A-\delta I)\mb x=\mb b$ for some parameters $\delta\in \mathbb C$. All findings of this section hold for general matrices $A$.

 \subsection{Orthogonal polynomials linked to the Arnoldi process}\label{subsec:OP}

Given an $N\times N$ upper Hessenberg matrix $H_N$ with positive real entries $h_{k+1,k}$ on the subdiagonal.
We define polynomials $q_0, q_1,...,q_{N-1}$ recursively through the formula
\begin{eqnarray}
       q_0(z) &=& 1, \nonumber \\
       q_k(z) h_{k+1,k} &=& z q_{k-1}(z) - \sum_{j=1}^k q_{j-1}(z) h_{j,k}, \,\, \text{if}\,\, N-1 \geq k \geq 1. \label{eq:qrec}
\end{eqnarray}
 Direct computation yields the following well known link with the characteristic polynomials of the principal submatrices:
 \begin{equation} q_k(z) = \frac{1}{\prod_{i=1}^k h_{i+1,i}}\text{det}(zI_k-H_k), \label{eq:qdet} \end{equation}
showing that $q_k$ is of degree $k$, with positive leading coefficient. In what follows we will write \eqref{eq:qrec} in the form
\begin{equation}
   (q_0(z), \ldots, q_{k}(z)) \underline H_k = z(q_0(z), \ldots, q_{k-1}(z)),
 \label{eq:1}
\end{equation}
where $H_k$ is the $k \times k$ principal minor of $H_N$ and $N-1 \geq k \geq 1$.
One deduces from \eqref{eq:qrec} by recurrence on $k$ that $q_k(H_N)\mb{e}_1=\mb{e}_{k+1}$. Assume we apply the Arnoldi process to a matrix $A \in \mathbb{C}^{n \times n}$, that $N \leq n$ is
the Arnoldi termination index and $H_N$ the underlying Hessenberg matrix. Using \eqref{eq:exactness}, this implies
\begin{equation} \label{eq:arnoldi}
       \mb v_k = q_{k-1}(A)\mb v_1,
\end{equation}
and thus indeed the $q_j$ is the $j$th orthonormal polynomial with respect to the scalar product \eqref{eq:scalar_product}. Also, from \eqref{eq:qdet} we see that the $k$th
FOM iterate for the shifted system $(A-\delta I)\mb x=\mb b$ exists if and only if $q_k(\delta)\neq 0$, and that in this case the $k$th FOM residual is given by
$q_k(A)\mb b/q_k(\delta)$, compare with \cite[Proposition~6.7]{saad}.

 \subsection{The $QR$-factorization of a Hessenberg matrix}\label{subsec:QR}
We will derive an explicit formula for the unitary factor in the $QR$-decomposition of the upper Hessenberg matrix $\underline H_k$ in terms of the orthonormal polynomials $q_0,...,q_k$.
To our knowledge, such a result is new. It could also be potentially useful for studying the convergence of the $QR$-method with shifts.


Let $Q_{k+1}(\delta)$ be the unitary factor in the $QR$-decomposition of $\underline H_k - \delta \underline I_k$, i.e.,
   \begin{equation}
         Q_{k+1}(\delta)^\ast (\underline H_k - \delta \underline I_k)
         =   \underline{R}_k(\delta) := \left[\begin{array}{c}
                    R_k(\delta) \\
                    0
                   \end{array}\right] \in \mathbb{C}^{(k+1)\times k}, \label{eq:qrdef}
   \end{equation}
   $R_k(\delta)$ an upper triangular matrix with positive real entries on its main diagonal.
It is well known (see, e.g., \cite[Subsection~6.5.3]{saad}) that $Q_{k+1}(\delta)$ can be obtained
as a product of Givens rotations, which are applied to $\underline{H}_k-\delta \underline I_k$ to annihilate the first subdiagonal.
We follow \cite{BeckReich}, imposing the matrices $Q_{k+1}(\delta)$ to have determinant 1.
\begin{definition} \label{def:givens}
 Let $Q_1 = [1]$, and define for $k \geq 1$,
\[
  Q_{k+1}(\delta)^\ast = \Omega_{k+1}(\delta) \left[\begin{array}{cc}
                                    Q_k(\delta)^\ast & 0\\
                                    0 & 1
                                   \end{array}\right],
\quad \Omega_{k+1}(\delta) = \left[\begin{array}{crr}
                       I_{k-1} & 0 &0 \\
                       0 & \overline{c_k(\delta)} & s_k(\delta) \\
                       0 & -s_k(\delta) & c_k(\delta)
                      \end{array}\right], 
\]
with $s_k(\delta) \geq 0$ and $s_k(\delta)^2 + |c_k(\delta)|^2 = 1$, such that \eqref{eq:qrdef} holds.
\end{definition}

\begin{proposition}\label{prop:QR}
   Let $\delta\in \mathbb C$, and
   \[
        \sigma_k(z) := \sqrt{\sum_{j=0}^k |q_j(z)|^2}.
   \]
   Then the unitary factor $Q_{k+1}(\delta)^\ast$
   is given by
   \[
      \left[\begin{array}{cccccc}
               -\frac{q_0(\delta)\overline{q_{1}(\delta)}}{\sigma_{1}(\delta) \sigma_{0}(\delta)} & \frac{\sigma_{0}(\delta)}{\sigma_1(\delta)}
               & 0  & \cdots & \cdots & 0 \\ [0.2cm]
             -\frac{q_0(\delta)\overline{q_{2}(\delta)}}{\sigma_{2}(\delta) \sigma_{1}(\delta)} &
             -\frac{q_1(\delta)\overline{q_{2}(\delta)}}{\sigma_{2}(\delta) \sigma_{1}(\delta)} &
             \frac{\sigma_{1}(\delta)}{\sigma_{2}(\delta)} & 0 & \\ [0.2cm]
              \vdots & \vdots & \ddots   & \ddots & \ddots & \vdots \\ [0.2cm]
              \vdots & \vdots & & \ddots   & \ddots & 0 \\ [0.2cm]
             -\frac{q_0(\delta)\overline{q_{k}(\delta)}}{\sigma_{k}(\delta) \sigma_{k-1}(\delta)} &
             -\frac{q_1(\delta)\overline{q_{k}(\delta)}}{\sigma_{k}(\delta) \sigma_{k-1}(\delta)} & \cdots& \cdots
             & -\frac{q_{k-1}(\delta)\overline{q_{k}(\delta)}}{\sigma_{k}(\delta) \sigma_{k-1}(\delta)} & \frac{\sigma_{k-1}(\delta)}{\sigma_k(\delta)} \\ [0.2cm]
            (-1)^k \frac{q_0(\delta)}{\sigma_k(\delta)} &  (-1)^k\frac{q_1(\delta)}{\sigma_k(\delta)} &  \cdots &  \cdots & (-1)^k\frac{q_{k-1}(\delta)}{\sigma_k(\delta)} &  (-1)^k\frac{q_k(\delta)}{\sigma_k(\delta)}
            \end{array}\right].
\] Moreover,
\begin{equation}
   s_k(\delta) = \frac{\sigma_{k-1}(\delta)}{\sigma_k(\delta)}\quad \text{and} \quad c_k(\delta) = (-1)^k \frac{q_k(\delta)}{\sigma_k(\delta)}, \quad \text{for} \,\, k \geq 1. \label{eq:cs}
 \end{equation}
\end{proposition}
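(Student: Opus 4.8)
The plan is to argue by induction on $k$, driving the recursion with the Givens factorization of Definition~\ref{def:givens} and feeding it with the recurrence \eqref{eq:qrec} for the polynomials $q_j$. The conceptual starting point is that, evaluating \eqref{eq:1} at $z=\delta$, the row vector $\mb p_k:=(q_0(\delta),\dots,q_k(\delta))$ satisfies $\mb p_k(\underline H_k-\delta\underline I_k)=\delta(q_0(\delta),\dots,q_{k-1}(\delta))-\delta(q_0(\delta),\dots,q_{k-1}(\delta))=0$; that is, $\mb p_k$ spans the left null space of $\underline H_k-\delta\underline I_k$. This null space is one dimensional, since the bottom $k\times k$ block of $\underline H_k-\delta\underline I_k$ is upper triangular with positive diagonal $h_{i+1,i}$, so the matrix has full column rank. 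As the last row of $\underline R_k$ in \eqref{eq:qrdef} vanishes, the last row of $Q_{k+1}(\delta)^\ast$ must be a unimodular multiple of $\mb p_k/\sigma_k(\delta)$, which already identifies the bottom row of the asserted matrix up to the sign $(-1)^k$; the induction will pin down this sign.

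For the inductive step I would assume the formula for $Q_k(\delta)^\ast$; in particular its last row is $(-1)^{k-1}\sigma_{k-1}(\delta)^{-1}(q_0(\delta),\dots,q_{k-1}(\delta))$. Writing $Q_{k+1}(\delta)^\ast=\Omega_{k+1}(\delta)\,\mathrm{diag}(Q_k(\delta)^\ast,1)$ and multiplying on the right by $\underline H_k-\delta\underline I_k$, I first note that the first $k-1$ columns reproduce $\underline R_{k-1}$ (this is precisely \eqref{eq:qrdef} at index $k-1$), and that these columns are untouched by $\Omega_{k+1}(\delta)$, which mixes only rows $k$ and $k+1$. The whole step thus reduces to the last column: I must compute the two entries in rows $k$ and $k+1$ of $\mathrm{diag}(Q_k(\delta)^\ast,1)(\underline H_k-\delta\underline I_k)$ before the final rotation, namely $h_{k+1,k}$ in row $k+1$ and $w_k:=(\text{last row of }Q_k(\delta)^\ast)\cdot(\text{$k$-th column of }H_k-\delta I_k)$ in row $k$.

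The computational heart is the evaluation of $w_k$. Using the induction hypothesis and the $k$-th column $(h_{1,k},\dots,h_{k-1,k},h_{k,k}-\delta)^\top$, the relevant sum is $\sum_{j=1}^k q_{j-1}(\delta)h_{j,k}-\delta q_{k-1}(\delta)$, and the recurrence \eqref{eq:qrec} gives $\sum_{j=1}^k q_{j-1}(\delta)h_{j,k}=\delta q_{k-1}(\delta)-h_{k+1,k}q_k(\delta)$, so the diagonal-shift term cancels $\delta q_{k-1}(\delta)$ and leaves $w_k=(-1)^k h_{k+1,k}q_k(\delta)/\sigma_{k-1}(\delta)$. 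I then impose Definition~\ref{def:givens}: $\Omega_{k+1}(\delta)$ must annihilate the $(k+1,k)$ entry ($c_k h_{k+1,k}=s_k w_k$), satisfy $s_k^2+|c_k|^2=1$ with $s_k\ge 0$, and yield a positive $(k,k)$ entry. Substituting $w_k$ and using $\sigma_{k-1}(\delta)^2+|q_k(\delta)|^2=\sigma_k(\delta)^2$ solves this uniquely as $s_k=\sigma_{k-1}/\sigma_k$ and $c_k=(-1)^k q_k/\sigma_k$, which is \eqref{eq:cs}; a direct check gives the positive diagonal entry $h_{k+1,k}\sigma_k/\sigma_{k-1}$. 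Plugging these values back into $\Omega_{k+1}(\delta)\,\mathrm{diag}(Q_k(\delta)^\ast,1)$ and invoking the induction hypothesis for the last row of $Q_k(\delta)^\ast$ reproduces rows $k$ and $k+1$ of the asserted matrix, while rows $1,\dots,k-1$ are inherited unchanged with an appended zero. The base case $Q_1=[1]$ fits the last-row pattern, since $q_0=\sigma_0=1$.

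I expect the main obstacle to be bookkeeping the phases correctly: $c_k(\delta)$ is genuinely complex, whereas $s_k(\delta)$, $\sigma_j(\delta)$ and the $h_{i+1,i}$ are real, so one must carefully track the conjugation in $\Omega_{k+1}(\delta)$ and verify that the positivity of the diagonal of $R_k(\delta)$, together with $s_k\ge 0$ and $\det Q_{k+1}(\delta)=1$, is exactly what selects $(-1)^k$ rather than its negative. The telescoping evaluation of $w_k$ via \eqref{eq:qrec} is the other delicate point, as it is where the Hessenberg structure of $H_k$ enters and produces the clean factor $q_k(\delta)$.
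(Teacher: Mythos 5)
Your proof is correct, but it is organized differently from the paper's. The paper proceeds by direct verification: it takes the displayed matrix as a candidate, leaves the orthonormality of its rows to the reader, and then shows that the candidate times $\underline H_k - \delta \underline I_k$ is upper triangular with positive diagonal by computing the generic subdiagonal, diagonal, and last-row entries, each time invoking \eqref{eq:1}; the identities \eqref{eq:cs} are then obtained separately by checking the recursion of Definition~\ref{def:givens} on the last row block. You instead run an induction on $k$ in which that recursion is the engine: the first $k-1$ columns of the product are disposed of by \eqref{eq:qrdef} at index $k-1$, the telescoping evaluation $w_k=(-1)^k h_{k+1,k}\,q_k(\delta)/\sigma_{k-1}(\delta)$ via \eqref{eq:qrec} (the exact counterpart of the paper's diagonal-entry computation via \eqref{eq:1}) reduces everything to one $2\times 2$ rotation, and the annihilation condition $c_k(\delta) h_{k+1,k}=s_k(\delta) w_k$ together with $s_k(\delta)\geq 0$ and $s_k(\delta)^2+|c_k(\delta)|^2=1$ forces \eqref{eq:cs} uniquely, after which rows $k$ and $k+1$ of the asserted matrix drop out. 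Your route buys three things the paper's does not: unitarity of $Q_{k+1}(\delta)$ is automatic (a product of unitary matrices), so no separate orthonormality check is needed; the rotation coefficients are derived rather than verified, so you also establish that Definition~\ref{def:givens} is well posed, i.e., determines $c_k(\delta)$ and $s_k(\delta)$ uniquely, which the paper takes for granted; and your opening null-space remark explains a priori why the last row must be proportional to $(q_0(\delta),\ldots,q_k(\delta))$, in line with the paper's remark after the proposition about rescaling that row by a phase. What the paper's verification buys is brevity: all entries $(j,\ell)$ are handled in one pass, with no induction bookkeeping. One small correction to your closing paragraph: the sign $(-1)^k$ is not selected by positivity of the diagonal of $R_k(\delta)$ nor by $\det Q_{k+1}(\delta)=1$ (the latter holds automatically, since $\det \Omega_{k+1}(\delta)=|c_k(\delta)|^2+s_k(\delta)^2=1$ for any admissible choice); as your own computation shows, it is forced by the annihilation condition, because $w_k$ carries that sign, and positivity of the new diagonal entry $h_{k+1,k}\sigma_k(\delta)/\sigma_{k-1}(\delta)$ is then a consequence rather than a selector.
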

\begin{proof}
   We leave it to the reader to check that the candidate for $Q_{k+1}(\delta)^\ast$
   indeed has orthonormal rows. It remains to check the subdiagonal and diagonal entries of $Q_{k+1}(\delta)^*(\underline H_k - \delta \underline I_k)$.
   For $k \geq j > \ell$ we find that
   \begin{eqnarray*} &&
      \mb e_j^\ast Q_{k+1}(\delta)^\ast(\underline H_k - \delta \underline I_k) \mb e_\ell
      \\&&= - \frac{\overline{q_j(\delta)}}{\sigma_j(\delta) \sigma_{j-1}(\delta)}\bigl(q_0(\delta), \ldots, q_{j-1}(\delta), \underbrace{0, \ldots, 0}_{k+1-j})
      (\underline H_k - \delta \underline I_k) \mb e_\ell
      \\&& + \left(\underbrace{0, \ldots, 0}_{j}, \frac{\sigma_{j-1}(\delta)}{\sigma_j(\delta)}, \underbrace{0, \ldots, 0}_{k-j} \right)(\underline H_k - \delta \underline I_k) \mb e_\ell
      \\&&= - \frac{\overline{q_j(\delta)}}{\sigma_j(\delta) \sigma_{j-1}(\delta)}
       \, \left( q_0(\delta),\ldots,q_{\ell}(\delta)\right)
      (\underline H_\ell-\delta \underline I_\ell)\mb{e}_\ell  = 0,
   \end{eqnarray*}
   where the second equality is because of the fact that only the first $\ell+1\leq j$ entries of $ (\underline H_k - \delta \underline I_k) \mb e_\ell$ are nonzero and
   the third equality because of \eqref{eq:1}. Similarly, for $k \geq j=\ell $,
   \begin{eqnarray*} &&
   \mb e_\ell^\ast Q_{k+1}(\delta)^*(\underline H_k - \delta \underline I_k) \mb e_\ell \\&&=
      - \frac{\overline{q_\ell(\delta)}}{\sigma_\ell(\delta) \sigma_{\ell-1}(\delta)}
      (q_0(\delta),\ldots,q_{\ell-1}(\delta), \underbrace{0, \ldots, 0}_{k+1-\ell})(\underline H_k - \delta \underline I_k) \mb e_\ell \\&&+
      \left(\underbrace{0, \ldots, 0}_{\ell}, \frac{\sigma_{\ell-1}(\delta)}{\sigma_\ell(\delta)}, \underbrace{0, \ldots, 0}_{k-\ell} \right)(\underline H_k - \delta \underline I_k) \mb e_\ell
      \\&&=
      - \frac{\overline{q_\ell(\delta)}}{\sigma_\ell(\delta) \sigma_{\ell-1}(\delta)}
      (q_0(\delta),\ldots,q_{\ell-1}(\delta))
      (H_\ell - \delta I_\ell) \mb e_\ell
      + \frac{\sigma_{\ell-1}(\delta)}{ \sigma_{\ell}(\delta)} h_{\ell+1,\ell} \\&&=
      - \frac{\overline{q_\ell(\delta)}}{\sigma_\ell(\delta) \sigma_{\ell-1}(\delta)} \left(-q_{\ell}(\delta) h_{\ell+1,\ell} \right) +
      \frac{\sigma_{\ell-1}(\delta)}{\sigma_{\ell}(\delta)} h_{\ell+1,\ell}  \\&&=
      \frac{h_{\ell+1,\ell}}{\sigma_\ell(\delta) \sigma_{\ell-1}(\delta)} \left(|q_\ell(\delta)|^2 + \sigma_{\ell-1}(\delta)^2 \right)
      = h_{\ell+1,\ell} \frac{\sigma_{\ell}(\delta)}{ \sigma_{\ell-1}(\delta)}>0.
   \end{eqnarray*}
   Finally, for $k+1 = j \geq \ell$,
   \begin{eqnarray*} &&
    \mb{e}_{k+1}^\ast Q_{k+1}(\delta)^\ast (\underline H_k - \delta \underline I_k) \mb e_\ell \\ &&=
    \frac{(-1)^k}{\sigma_k(\delta)} \left(q_0(\delta), q_1(\delta), \ldots, q_k(\delta) \right) \left(\underline H_k - \delta I_k \right) \mb e_\ell = 0,
   \end{eqnarray*}
   according to \eqref{eq:1}.
   To prove \eqref{eq:cs}, observe that
   \[
   Q_{k+1}(\delta)^\ast = \Omega_{k+1}(\delta) \left[\begin{array}{cc}
                                    Q_k(\delta)^\ast & 0\\
                                    0 & 1
                                   \end{array}\right],
    \]
if and only if by multiplying on the left with $\left[\begin{array}{rr}
                                    \overline{c_k(\delta)} & s_k(\delta)\\
                                    -s_k(\delta) & c_k(\delta)
                                   \end{array}\right]$ we transform
\[
\left[\begin{array}{cccccc}
            (-1)^{k-1} \frac{q_0(\delta)}{\sigma_{k-1}(\delta)} &  (-1)^{k-1}\frac{q_1(\delta)}{\sigma_{k-1}(\delta)} &  \cdots &  (-1)^{k-1}\frac{q_{k-1}(\delta)}{\sigma_{k-1}(\delta)} &  0
            \\ [0.2cm] 0 & 0 & \cdots & 0 & 1
            \end{array}\right]
\]
into
\[
      \left[\begin{array}{cccccc}
             -\frac{q_0(\delta)\overline{q_{k}(\delta)}}{\sigma_{k}(\delta) \sigma_{k-1}(\delta)} &
             -\frac{q_1(\delta)\overline{q_{k}(\delta)}}{\sigma_{k}(\delta) \sigma_{k-1}(\delta)} & \cdots
             & -\frac{q_{k-1}(\delta)\overline{q_{k}(\delta)}}{\sigma_{k}(\delta) \sigma_{k-1}(\delta)} & \frac{\sigma_{k-1}(\delta)}{\sigma_k(\delta)} \\ [0.2cm]
            (-1)^k \frac{q_0(\delta)}{\sigma_k(\delta)} &  (-1)^k\frac{q_1(\delta)}{\sigma_k(\delta)} &  \cdots &  (-1)^k\frac{q_{k-1}(\delta)}{\sigma_k(\delta)} &  (-1)^k\frac{q_k(\delta)}{\sigma_k(\delta)}
            \end{array}\right],
\]
the latter being true for $c_k(\delta)$ and $s_k(\delta)$ as in \eqref{eq:cs}.
\end{proof}

Notice that, according to the nested structure of the Hessenberg matrices,
$Q_{k+1}(\delta)^{\ast}(H_{k+1}-\delta I_{k+1})$ is also upper triangular,
but its last diagonal entry given by
\[
           (-1)^{k+1} \, {h_{k+2,k+1} \, q_{k+1}(\delta)} / {\sigma_k(\delta)}
\]
is not necessarily a positive real number. Hence, for obtaining the unitary factor in the unique $QR$-decomposition of $H_{k+1}-\delta I_{k+1}$ we should rescale
the last row of $Q_{k+1}(\delta)^\ast$ as given in Proposition~\ref{prop:QR} by a phase of modulus $1$.

Let us consider the special case of $\delta=0$ and unitary $A$ as a running example.

\begin{example}\label{ex:unitary1}
    Suppose that $\delta=0$ and $A$ is unitary.
    Then $A V_k=V_{k+1} \underline H_k$ and thus $\underline H_k$ has orthonormal columns, showing that
    $\underline H_k =Q_{k+1}(0) \underline I_k$ and $R_k(0)=I_k$.
    From Proposition~\ref{prop:QR} we get explicit formulas for the entries of $\underline H_k$, in particular, for unitary $A$,
    \begin{equation} \label{eq:some_entries}
         h_{k+1,k} = \frac{\sigma_{k-1}(0)}{\sigma_k(0)} = s_k(0),
         \quad
         (-1)^{k-1} \sigma_{k-1}(0) h_{1,k}=
         (-1)^{k}  \frac{q_k(0)}{\sigma_k(0)} = c_k(0).
    \end{equation}
Furthermore, $A V_k = V_{k+1} Q_{k+1}(0) \underline I_k$.
Therefore,
\begin{eqnarray*}
 A \mb v_k &=& V_{k+1} \left[ \begin{array}{cc} Q_k(0) & 0\\ 0 & 1 \end{array} \right] \left[ \begin{array}{cc} I_{k-1} & 0 \\ 0 & c_k(0) \\ 0 & s_k(0) \end{array} \right] \mb e_k \\
           &=& V_{k+1} \left[ \begin{array}{c} c_k(0) Q_k(0)\mb e_k \\ s_k(0) \end{array} \right] \\
           &=& s_k(0) \mb v_{k+1} + (-1)^{k-1} c_k(0) \mb{\tilde v}_k,
\end{eqnarray*}
with $\mb{\tilde v}_k := \frac{1}{\sigma_{k-1}(0)}\sum_{j=0}^{k-1} \overline{q_j(0)} \mb v_{j+1}$. Also, $\mb {\tilde v}_{k+1} = s_k(0) \mb {\tilde v}_k + (-1)^k \overline{c_k(0)} \mb v_{k+1}$.
Hence, the orthonormal vectors $\mb v_k$ can be constructed using two short recurrence relations:

\begin{algorithm*}[H]
 $\mb{\tilde v}_1 := \mb v_1$\;
  \For{$k=1, \ldots, N-1$}{
  $\mb z:= A \mb v_k$\;
  $\gamma_k := -\tilde{\mb v}_k^\ast \mb z$\;
  $\sigma_k := (1-|\gamma_k|^2)^{1/2}$\;
  $\mb v_{k+1} := \sigma_k^{-1}\left(\mb z + \gamma_k \tilde{\mb v}_k \right)$\;
  $\mb{\tilde v}_{k+1} := \sigma_k \mb{\tilde v}_k + \overline{\gamma}_k \mb v_{k+1}$\;
  }
\end{algorithm*}
Note that $\gamma_k = (-1)^k c_k(0)$ and $\sigma_k = s_k(0)$.
The above double recurrence relation is known as the `Isometric Arnoldi algorithm' designed by Gragg \cite{m281}.

\end{example}

As a consequence of Proposition~\ref{prop:QR}, according to Definition~\ref{def:upper_separable}, we can derive some statements on the rank structure of the unitary
Hessenberg matrix $Q_{k+1}(\delta)$, and on a decay property of its entries.

\begin{corollary}\label{cor:semisep}
   $Q_{k+1}(\delta)$ is $(0,1)$-upper-separable\footnote{This implies that $Q_{k+1}(\delta)$ is quasi separable, but in general not semiseparable.}.
    Moreover, for the submatrix $\widetilde Q$ of $Q_{k+1}(\delta)$ formed with the first $m \leq k+1$ rows and the last $\ell \leq k-m+2$ columns we have that
   \[
       \| \widetilde Q \| = \sigma_{m-1}(\delta)/\sigma_{k-\ell+1}(\delta).
   \]
\end{corollary}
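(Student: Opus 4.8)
The plan is to extract the entries of $Q_{k+1}(\delta)$ directly from the closed form for $Q_{k+1}(\delta)^\ast$ in Proposition~\ref{prop:QR}, and to recognize that on and above the main diagonal $Q_{k+1}(\delta)$ is the restriction of a single rank-one matrix. Conjugate-transposing the displayed matrix and using $\bigl(Q_{k+1}(\delta)\bigr)_{i,j} = \overline{\bigl(Q_{k+1}(\delta)^\ast\bigr)_{j,i}}$, I would first record that for all indices $i\leq j$,
\[
   \bigl(Q_{k+1}(\delta)\bigr)_{i,j} = \overline{q_{i-1}(\delta)}\,d_j, \qquad d_j := \begin{cases} -\dfrac{q_j(\delta)}{\sigma_j(\delta)\,\sigma_{j-1}(\delta)}, & 1\leq j\leq k,\\[0.35cm] (-1)^k\,\dfrac{1}{\sigma_k(\delta)}, & j=k+1.\end{cases}
\]
Writing $\mb u := (\overline{q_0(\delta)},\ldots,\overline{q_k(\delta)})^\top$ and $\mb d := (d_1,\ldots,d_{k+1})^\top$, this says precisely that $Q_{k+1}(\delta)$ agrees with the rank-one matrix $\mb u\,\mb d^\top$ in every entry on or above the main diagonal.

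With this factorization in hand the separability claim is immediate. Any submatrix of the form $B(1:j,\,j:k+1)$ appearing in Definition~\ref{def:upper_separable} (with $r=0$) consists only of entries $(i',j')$ with $i'\leq j\leq j'$, hence lies entirely in the on-and-above-diagonal region and is therefore a submatrix of $\mb u\,\mb d^\top$. Consequently its rank is at most $1$, which is exactly the assertion that $Q_{k+1}(\delta)$ is $(0,1)$-upper-separable.

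For the norm statement I would observe that the hypothesis $\ell\leq k-m+2$ guarantees that the last $\ell$ columns start at index $k-\ell+2\geq m$, so the whole submatrix $\widetilde Q$ (first $m$ rows, last $\ell$ columns) again lies in the region $i\leq j$. Hence $\widetilde Q = \mb u_{1:m}\,(\mb d_{\,k-\ell+2:k+1})^\top$ is rank one, and for a rank-one matrix the spectral norm equals the product of the Euclidean norms of its two factors, so $\|\widetilde Q\| = \|\mb u_{1:m}\|\,\|\mb d_{\,k-\ell+2:k+1}\|$. The first factor gives $\|\mb u_{1:m}\|^2 = \sum_{i=0}^{m-1}|q_i(\delta)|^2 = \sigma_{m-1}(\delta)^2$ directly from the definition of $\sigma_{m-1}$. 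For the second factor I would use the identity $|q_j(\delta)|^2 = \sigma_j(\delta)^2-\sigma_{j-1}(\delta)^2$, which turns $|d_j|^2$ into $\sigma_{j-1}(\delta)^{-2}-\sigma_j(\delta)^{-2}$ for $j\leq k$ and into $\sigma_k(\delta)^{-2}$ for $j=k+1$; summing over $j=k-\ell+2,\ldots,k+1$ telescopes to $\sigma_{k-\ell+1}(\delta)^{-2}$. Combining the two factors yields $\|\widetilde Q\| = \sigma_{m-1}(\delta)/\sigma_{k-\ell+1}(\delta)$.

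The routine parts are the conjugate transposition and the telescoping sum; the only place demanding care is the bookkeeping of the last row and last column of $Q_{k+1}(\delta)^\ast$, whose entries carry the factor $(-1)^k$ and involve $1/\sigma_k(\delta)$ rather than the generic pattern $q_j/(\sigma_j\sigma_{j-1})$. I would therefore treat the terminal index $j=k+1$ separately throughout, and check that it is exactly the term that makes the telescoping sum close to $\sigma_{k-\ell+1}(\delta)^{-2}$ (e.g.\ in the extreme case $\ell=1$, where the generic part of the sum is empty). Once the uniform rank-one description on and above the diagonal is established, both conclusions follow with no further structural input.
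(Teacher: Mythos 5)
Your proof is correct and follows essentially the same route as the paper: the entrywise rank-one description you derive (column of conjugated polynomials times the first row of $Q_{k+1}(\delta)$, valid on and above the diagonal) is exactly the paper's structure formula for $Q_{k+1}(\delta)$, and your norm computation via $|q_j(\delta)|^2=\sigma_j(\delta)^2-\sigma_{j-1}(\delta)^2$ and the telescoping sum matches the paper's calculation term for term.
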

\begin{proof}
   The first statement follows by observing that
  \begin{equation} Q_{k+1}(\delta) =  \left[\begin{array}{c}
              \overline{q_0(\delta)} \\
              \vdots \\
              \overline{q_k(\delta)}
             \end{array}
 \right] \mb{e}_1^\ast Q_{k+1}(\delta) +  L_{k+1}(\delta),  \label{eq:structQ} \end{equation}
  with $L_{k+1}(\delta)$ strictly lower triangular, i.e., with zero entries on the main diagonal. This is a direct consequence of Proposition~\ref{prop:QR}.
  In particular, we deduce that $\widetilde Q$ is of rank $1$.
 From Proposition~\ref{prop:QR} it follows that
  \begin{eqnarray*}
      \| \widetilde Q \|^2  &=&
      \frac{\sigma_{m-1}(\delta)^2}{\sigma_k(\delta)^2}
      + \sum_{j=k-\ell+2}^k \frac{|q_j(\delta)|^2}{\sigma_j(\delta)^2 \sigma_{j-1}(\delta)^2} \sigma_{m-1}(\delta)^2 \\
      &=& \frac{\sigma_{m-1}(\delta)^2}{\sigma_k(\delta)^2} + \sum_{j=k-\ell+2}^k \frac{\sigma_j(\delta)^2 - \sigma_{j-1}(\delta)^2}{\sigma_j(\delta)^2 \sigma_{j-1}(\delta)^2} \sigma_{m-1}(\delta)^2 \\
      &=& \sigma_{m-1}(\delta)^2 \left(\frac{1}{\sigma_k(\delta)^2} + \sum_{j=k-\ell+2}^k \left( \frac{1}{\sigma_{j-1}(\delta)^2} - \frac{1}{\sigma_j(\delta)^2} \right) \right),
  \end{eqnarray*}
  giving the claimed result.
\end{proof}

We end this subsection by observing that Corollary~\ref{cor:semisep} immediately  implies a rank property as well as a decay of entries for resolvents of $H_{k+1}$.

\begin{corollary}\label{cor:semisep2}
   Suppose that $H_{k+1}-\delta I_{k+1}$ is invertible. Then
   $(H_{k+1}-\delta I_{k+1})^{-*}$ is $(0,1)$-upper-separable.

   Moreover, for the submatrix $\widetilde H$ of $(H_{k+1}-\delta I_{k+1})^{-*}$ formed with the first $m \leq k+1$ rows and the last $\ell \leq k-m+2$ columns we have that
   \[
       \| \widetilde H \| \leq \| (H_{k+1}-\delta I_{k+1})^{-1} \| \,  \sigma_{m-1}(\delta)/\sigma_{k-\ell+1}(\delta).
   \]
\end{corollary}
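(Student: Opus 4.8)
The plan is to transfer both assertions from the unitary factor $Q_{k+1}(\delta)$ to the resolvent by means of the $QR$-decomposition. Recall from the discussion following Proposition~\ref{prop:QR} that $Q_{k+1}(\delta)^\ast(H_{k+1}-\delta I_{k+1})=R$ is upper triangular (only its last diagonal entry need not be positive), hence $H_{k+1}-\delta I_{k+1}=Q_{k+1}(\delta)R$ with $R$ invertible, and therefore
\[
   (H_{k+1}-\delta I_{k+1})^{-\ast}=Q_{k+1}(\delta)\,R^{-\ast},
\]
where $R^{-\ast}$ is lower triangular. This identity is the backbone of the argument: everything reduces to combining the already established structure of $Q_{k+1}(\delta)$ with a right multiplication by a lower triangular matrix.

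For the upper-separability statement I would insert the decomposition \eqref{eq:structQ} of $Q_{k+1}(\delta)$ as a rank-one term plus a strictly lower triangular matrix $L_{k+1}(\delta)$. Multiplying on the right by the lower triangular $R^{-\ast}$ keeps the rank-one term rank-one, while $L_{k+1}(\delta)R^{-\ast}$ remains strictly lower triangular, since the product of a strictly lower triangular and a lower triangular matrix is again strictly lower triangular. Thus $(H_{k+1}-\delta I_{k+1})^{-\ast}$ is once more the sum of a rank-one and a strictly lower triangular matrix, which is exactly the assertion that it is $(0,1)$-upper-separable in the sense of Definition~\ref{def:upper_separable}.

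For the norm bound I would exploit the lower triangular structure of $R^{-\ast}$ a second time to factor the selected block. The last $\ell$ columns of $R^{-\ast}$ are supported only in their last $\ell$ rows, so they reduce to the bottom-right $\ell\times\ell$ block $T$ of $R^{-\ast}$. Consequently the last $\ell$ columns of $Q_{k+1}(\delta)R^{-\ast}$ equal (the last $\ell$ columns of $Q_{k+1}(\delta)$) times $T$, and restricting further to the first $m$ rows yields
\[
   \widetilde H=\widetilde Q\,T,
\]
where $\widetilde Q$ is precisely the submatrix of $Q_{k+1}(\delta)$ occurring in Corollary~\ref{cor:semisep} (same $m$ and $\ell$). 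Submultiplicativity then gives $\|\widetilde H\|\le\|\widetilde Q\|\,\|T\|$. It remains to substitute $\|\widetilde Q\|=\sigma_{m-1}(\delta)/\sigma_{k-\ell+1}(\delta)$ from Corollary~\ref{cor:semisep} and to estimate $\|T\|\le\|R^{-\ast}\|=\|R^{-1}\|=\|(H_{k+1}-\delta I_{k+1})^{-1}\|$, the last equality because $Q_{k+1}(\delta)$ is unitary and the spectral norm is both unitarily invariant and invariant under conjugate transposition, which delivers the claimed inequality.

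The only genuinely delicate point is the index bookkeeping in the factorization $\widetilde H=\widetilde Q\,T$: one must verify that the hypothesis $\ell\le k-m+2$ places the selected block entirely on or above the main diagonal, so that the clean value of $\|\widetilde Q\|$ from Corollary~\ref{cor:semisep} is applicable and the strictly lower triangular remainder contributes nothing. Everything else is routine triangular algebra together with the unitary invariance of the spectral norm.
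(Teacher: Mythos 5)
Your proposal is correct and takes essentially the same approach as the paper: the factorization $(H_{k+1}-\delta I_{k+1})^{-\ast}=Q_{k+1}(\delta)R^{-\ast}$ with $R^{-\ast}$ lower triangular, the structure formula \eqref{eq:structQ} for the first statement, and Corollary~\ref{cor:semisep} for the norm bound. The only cosmetic difference is in the bookkeeping for the bound: the paper first uses the rank-one structure to reduce to the first row (writing $\|\widetilde H\|=\sigma_{m-1}(\delta)\,\|\mb e_1^\ast\widetilde H\|$ and then applying Corollary~\ref{cor:semisep} with $m=1$, where $\sigma_0(\delta)=1$), whereas you apply Corollary~\ref{cor:semisep} with general $m$ directly to the factorization $\widetilde H=\widetilde Q\,T$; both yield the identical estimate.
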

\begin{proof}
   Let us write $H_{k+1}-\delta I_{k+1}=Q_{k+1}(\delta) R$ with upper triangular and invertible $R$. Then
   \begin{equation*}
        (H_{k+1}-\delta I_{k+1})^{-*} = Q_{k+1}(\delta) R^{-*}
   \end{equation*}
   with $R^{-*}$ lower triangular of norm $\| (H_{k+1}-\delta I_{k+1})^{-1} \|$. Replacing $Q_{k+1}(\delta)$ by \eqref{eq:structQ} yields
   \begin{equation}  \label{eq:resolvent}
      (H_{k+1}-\delta I_{k+1})^{-*} = (q_0(\delta),\ldots,q_k(\delta))^\ast \, \mb e_1^\ast (H_{k+1}-\delta I_{k+1})^{-\ast} + \widetilde L_{k+1} (\delta),
   \end{equation}
   with $\widetilde L_{k+1}(\delta)$ strictly lower triangular; proving the first statement. This implies that
   \begin{equation*}
      \widetilde H = (q_0(\delta),...,q_{m-1}(\delta))^\ast \, \mb e_1^\ast
      \widetilde H,
   \end{equation*}
   and thus $\| \widetilde H \| = \sigma_{m-1}(\delta) \, \| \mb e_1^\ast
      \widetilde H \|$.
   Notice that $\mb e_1^\ast
      \widetilde H$ is obtained by multiplying the first row of $Q_{k+1}(\delta)$ with the last $\ell$ columns of $R^{-*}$. As $R^{-\ast}$ is lower triangular, $\mb{e}_1^\ast \widetilde H = \widetilde Q \widetilde R$,
      $\widetilde Q$ a row vector formed with the last $\ell$ entries of the first row of $Q_{k+1}(\delta)$ and $\widetilde R$ the lower-right $\ell \times \ell$ minor of $R^{-\ast}$.
      Therefore, applying Corollary~\ref{cor:semisep} to $\widetilde Q$ yields
      \[
         \| \mb e_1^\ast \widetilde H \|
         \leq
         \, ||\widetilde Q||\, ||\widetilde R|| \leq
         \frac{\sigma_0(\delta)}{\sigma_{k-\ell+1}(\delta)} ||R^{-\ast}||,
      \]
      which together with  $\| \widetilde H \| = \sigma_{m-1}(\delta) \, \| \mb e_1^\ast
      \widetilde H \|$ and $\|R^{-*}\| = \| (H_{k+1}-\delta I_{k+1})^{-1} \|$ proves the second statement.
\end{proof}

 \subsection{The GMRES residual and orthogonal polynomials}\label{subsec:GMRES}
We will give some more details on the link between the GMRES residual vectors and orthogonal polynomials. More specifically, we will write the GMRES residual
vector as a linear combination of Arnoldi vectors.
In the particular case of unitary $A$, an even nicer relation arises for the GMRES residual vectors.

In what follows we denote by $\mb r_k(\delta)$ the $k$th GMRES residual for the shifted system $(A-\delta I_n)\mb x=\mb b$, with starting vector $\mb{x}_0=0$,
and denote by $\mb w_k(\delta):=\mb r_k(\delta)/\| \mb r_k(\delta)\|$ its normalized version.

\begin{proposition}
   The $k$th GMRES residual for the shifted system $(A-\delta I_n)\mb x=\mb{b}$ with starting vector $\mb{x}_0=0$, can be written as
   \begin{equation}
       \mb{r}_k(\delta)= p_k(A)\mb{r}_0(\delta) , \quad \mbox{where}\quad p_k(z) = \frac{1}{\sigma_k^2(\delta)} \sum_{j=0}^k \overline{q_j(\delta)}q_j(z). \label{eq:respol}
   \end{equation}
   For its normalized version we have
   \begin{equation}
      \mb{w}_k(\delta) = \frac{\mb{r}_k(\delta)}{\|\mb{r}_k(\delta)\|} = \frac{1}{\sigma_k(\delta)}\sum_{j=0}^k \overline{q_j(\delta)}\mb{v}_{j+1}. \label{eq:respolnorm}
   \end{equation}
\end{proposition}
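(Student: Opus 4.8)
The plan is to recast GMRES as a constrained least-squares problem in the orthonormal polynomial basis $q_0,\ldots,q_k$ and to solve it by a Cauchy--Schwarz argument.

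First I would recall the polynomial characterization of the GMRES residual. Since $\mb x_0=0$ we have $\mb r_0(\delta)=\mb b$, and because shifting by $\delta I$ leaves the Krylov space unchanged, $\mathcal K_k(A-\delta I,\mb b)=\mathcal K_k(A,\mb b)$. Hence the $k$th residual takes the form $\mb r_k(\delta)=P(A)\mb b$, with $P$ a polynomial of degree $\leq k$ normalized by $P(\delta)=1$, and GMRES selects the $P$ minimizing $\|P(A)\mb b\|$. Writing $\mb b=\|\mb b\|\,\mb v_1$ and using the norm $\|\cdot\|_{A,\mb v_1}$ induced by \eqref{eq:scalar_product}, this becomes the scalar problem of minimizing $\|P\|_{A,\mb v_1}$ over all $P$ with $\deg P\leq k$ and $P(\delta)=1$.

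Next I would exploit \eqref{eq:arnoldi}: since $\mb v_{j+1}=q_j(A)\mb v_1$ and the Arnoldi vectors are orthonormal, the family $q_0,\ldots,q_k$ is orthonormal for $\langle\cdot,\cdot\rangle_{A,\mb v_1}$ and spans all polynomials of degree $\leq k$ (this requires $k<N$, so that the scalar product is positive definite). Expanding $P=\sum_{j=0}^k\alpha_j q_j$ turns the problem into minimizing $\sum_{j=0}^k|\alpha_j|^2$ subject to $\sum_{j=0}^k\alpha_j q_j(\delta)=1$. By Cauchy--Schwarz, $1=|\sum_j\alpha_j q_j(\delta)|\leq(\sum_j|\alpha_j|^2)^{1/2}\sigma_k(\delta)$, with equality exactly when $\alpha_j$ is a scalar multiple of $\overline{q_j(\delta)}$; the constraint then forces $\alpha_j=\overline{q_j(\delta)}/\sigma_k^2(\delta)$. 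This identifies the optimal residual polynomial with $p_k$ as in \eqref{eq:respol} and gives the minimal value $\|p_k\|_{A,\mb v_1}=1/\sigma_k(\delta)$.

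Finally I would read off both formulas. The first, $\mb r_k(\delta)=p_k(A)\mb r_0(\delta)$, is immediate from $\mb r_0(\delta)=\mb b$. For the second, expanding $p_k(A)\mb v_1=\frac{1}{\sigma_k^2(\delta)}\sum_j\overline{q_j(\delta)}\,\mb v_{j+1}$ and dividing $\mb r_k(\delta)=\|\mb b\|\,p_k(A)\mb v_1$ by its norm $\|\mb r_k(\delta)\|=\|\mb b\|/\sigma_k(\delta)$ produces \eqref{eq:respolnorm}. The only genuinely substantive step is the reduction to the constrained scalar minimization together with the Cauchy--Schwarz extremal identity; once the orthonormal basis is in place the remaining computations are routine. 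A small point requiring care is ensuring $k<N$, so that $q_0,\ldots,q_k$ are well defined and orthonormal.
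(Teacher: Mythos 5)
Your proposal is correct and follows essentially the same route as the paper's proof: both reduce the GMRES residual to the extremal problem of minimizing $\|p\|_{A,\mb v_1}$ over polynomials of degree at most $k$ normalized at $\delta$, expand in the orthonormal basis $q_0,\ldots,q_k$, and apply Cauchy--Schwarz to identify the minimizer $p_k$ and the minimal value $1/\sigma_k(\delta)$, after which \eqref{eq:respolnorm} follows by dividing by $\|\mb r_k(\delta)\|=\|\mb b\|/\sigma_k(\delta)$. The only cosmetic difference is that the paper phrases the extremal problem as minimizing the ratio $\|p\|_{A,\mb v_1}/|p(\delta)|$ and rescales afterwards, whereas you impose the constraint $P(\delta)=1$ directly; your explicit remarks on shift invariance of the Krylov space and on needing $k<N$ are sound points of care that the paper leaves implicit.
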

\begin{proof}
   Since we choose as starting vector $\mb{x}_0=0$, we find the initial GMRES residual $\mb r_0(\delta) = \mb b - (A-\delta I_n)0=\mb b =  \mb v_1 \, \| \mb b \|$.
   Then we have
   \begin{equation}
       \mb{r}_k(\delta) = \mb{r}_0(\delta) - (A-\delta I_n)V_k \mb{y}, \,\, \text{with} \,\, \mb{y} = \text{argmin}_\mb{y} \|\mb{r}_0(\delta) - (A-\delta I_n)V_k \mb{y}\|. \label{eq:res}
   \end{equation}
   Note that $\mb{r}_0 - (A-\delta I_n)V_k \mb{y} $ can be written as $p_k(A)\mb{r}_0(\delta)$ with $p_k$ a polynomial of degree at most $k$ and
   $p_k(\delta)=1$. Then $\mb{r}_k(\delta) = p_k(A)\mb{r}_0$, $p_k = \widetilde{p_k}/\widetilde{p_k}(\delta)$, with
   \begin{equation}
       \widetilde{p_k} =  \text{argmin}_{p_ \in \mathcal{P}_k} \frac{\|p(A)\mb{r}_0(\delta)\|}{|p(\delta)|} =
       \text{argmin}_{p \in \mathcal{P}_k} \frac{\|p\|_{A,\mb v_1}}{|p(\delta)|}, \label{eq:5}
   \end{equation}
   where $\mathcal{P}_k$ denotes the set of polynomials of degree at most $k$, and we use the norm induced by \eqref{eq:scalar_product}.
   Note that if $p(z) = \sum_{j=0}^k c_j q_j(z)$, then $\|p\|_{A,\mb v_1}^2 = \sum_{j=0}^k |c_k|^2$ by orthonormality. Therefore, the Cauchy-Schwarz inequality yields
   \begin{equation}
       \frac{\|p\|_{A,\mb v_1}}{|p(\delta)|} \geq \frac{1}{\sigma_k(\delta)}, \label{eq:6}
   \end{equation}
   where the minimum is attained for $c_j = \overline{q_j(\delta)}$. Combining \eqref{eq:6} and \eqref{eq:5} we conclude that \eqref{eq:respol} holds.
   In particular, $\| \mb r_k(\delta) \| = \| b \|/\sigma_k(\delta)$, which together with \eqref{eq:arnoldi} implies \eqref{eq:respolnorm}.
\end{proof}

\begin{remark}
As $\sigma_{m-1}(\delta)/\sigma_{N-\ell+1} = ||\mb r_{N-\ell+1}(\delta)||/||\mb r_{m-1}(\delta)||$, we see that the decay rates in Corollary~\ref{cor:semisep} and
Corollary~\ref{cor:semisep2} are linked to the convergence of the GMRES algorithm. If the GMRES algorithm
converges faster, the decay pattern becomes more pronounced.
\end{remark}

\begin{remark}\label{rem:GMRESrate}
   By taking norms in \eqref{eq:respol}, we see that for the relative GMRES residual
   \[
           \frac{\| \mb r_k(\delta)\|}{\| \mb r_0(\delta) \|} = \frac{1}{\sigma_k(\delta)} = \min_{p \in \mathcal P_k} \frac{\| p(A)\mb b\|}{\| b \| |p(\delta)|}.
   \]
   More generally, for the decay rates in Corollary~\ref{cor:semisep} and Corollary~\ref{cor:semisep2} for $\ell\leq k-m +2$, we have
   \begin{equation} \label{eq:GMRESrate1}
          \frac{\sigma_{m-1}(\delta)}{\sigma_{k-\ell+1}(\delta)}
          \leq \min_{p \in \mathcal P_{k+2-m-\ell}}
          \frac{\| p(A)\mb w_{m-1}(\delta)\|}{\| \mb w_{m-1}(\delta)\| \, |p(\delta)|},
   \end{equation}
   Let $\Omega\subset \mathbb C$ be a simply connected and compact $K$-spectral set for $A$; that is, $\| \pi(A) \| \leq K \, \max_{z\in \Omega} |\pi(z) |$
   for all polynomials $\pi$ (and hence $\Lambda(A)\subset \Omega$) with $\delta\not\in \Omega$. Also, let $\varphi$ be a map, mapping $\mathbb C \setminus \Omega$
   conformally onto the exterior of the closed unit disk. Then it can be shown that
   the right-hand side of \eqref{eq:GMRESrate1} can be bounded above by $| 1/\varphi(\delta)|^{k+2-\ell-m}<1$ times a modest constant,
   see, e.g., \cite[Chapter 6.11.2]{saad} for the case where $\Omega$ is an ellipse.
\end{remark}

\begin{example}\label{ex:unitary2} As in Example~\ref{ex:unitary1}, assume the matrix $A$ is unitary.
Given a polynomial $p$ of degree $k$, its reversed polynomial $p^\ast$ is defined as \[p^\ast(z) := z^k \overline{p(1/\overline{z}).}\]
The orthogonal polynomials $q_k(z)$ can be expressed as
\begin{eqnarray}
        q_k(z) &=& q_k^\ast(0) \, \text{argmin}_{q \,\text{monic of degree}\, k} \|q\|_{A,\mb v_1} \nonumber \\
        &=& q_k^\ast(0) \, \text{argmin}_{q \,\text{of degree}\, k} \frac{\|q\|_{A,\mb v_1}}{|q_k^\ast(0)|}. \label{eq:rev}
\end{eqnarray}
Note that $q_k^\ast(0)$ is the leading coefficient of $q_k$. As $A$ is unitary, $\|q\|_{A,\mb v_1} = \|q^\ast\|_{A,\mb v_1}$. Hence, \eqref{eq:rev} can be rewritten as
\begin{equation*}
   q_k(z) = q_k^\ast(0) \text{argmin}_{q \,\text{of degree}\, k} \frac{\|q^\ast\|_{A,\mb v_1}}{|q^\ast(0)|}
 \end{equation*}
Therefore,
 \begin{equation}
   q_k^\ast(z) = q_k^\ast(0) \text{argmin}_{q \in \mathcal{P}_k, q(0) \neq 0} \frac{\|q\|_{A,\mb v_1}}{|q(0)|}. \label{eq:reversepoly}
 \end{equation}
 Because of \eqref{eq:qdet}, $q_k^\ast(0) \geq 0$. Also, $\|q_k^\ast\|_{A,\mb v_1}=1$. Combining this with
 \eqref{eq:5} and \eqref{eq:respol}, \eqref{eq:reversepoly} yields
 \begin{equation}
      q_k^\ast(z) = \frac{1}{\sigma_k(0)} \sum_{j=0}^k \overline{q_j(0)}q_j(z), \label{eq:reversedeq}
 \end{equation}
 Therefore, the $k$th normalized GMRES residual $\mb{w}_k(0)$ of a unitary matrix $A$ can be expressed as
 \begin{equation}
     \mb{w}_k(0) = q_k^\ast(A)\mb{v}_1. \label{eq:unigmres}
 \end{equation}
\end{example}

\subsection{A progressive GMRES residual formula}
\label{sec:prog}
By \eqref{eq:respolnorm} we have that the $k$th normalized GMRES residual satisfies
\begin{eqnarray}
     \mb{w}_{k}(\delta) &=& \frac{1}{\sigma_k(\delta)}\sum_{i=0}^{k} \overline{q_i(\delta)}\mb{v}_{i+1}  \nonumber \\
              &=& \frac{\sigma_{k-1}(\delta)}{\sigma_{k}(\delta)} \frac{1}{\sigma_{k-1}(\delta)} \sum_{i=0}^{k-1} \overline{q_i(\delta)}\mb{v}_{i+1}
              + \frac{\overline{q_{k}(\delta)}}{\sigma_{k}(\delta)} \mb{v}_{k+1}  \nonumber \\
              &=& \frac{\sigma_{k-1}(\delta)}{\sigma_{k}(\delta)}\mb{w}_{k-1}(\delta) + \frac{\overline{q_{k}(\delta)}}{\sigma_{k}(\delta)} \mb{v}_{k+1},  \nonumber
             \\ &=&
             s_k(\delta) \mb{w}_{k-1}(\delta) + (-1)^k \overline{c_k(\delta)} \mb{v}_{k+1}, \label{eq:update}
\end{eqnarray}
the last equality following from \eqref{eq:cs}.
This demonstrates the existence of an updating formula to compute the residual vectors progressively. This formula can also be derived by means of
the $QR$-factorization of $\underline H_k - \delta \underline I_k$, see Proposition~\ref{prop:QR} and \cite[Subsection~6.5.3]{saad}.
The next result shows that it is not necessary to compute such a factorization for obtaining $s_k(\delta)$ and $c_{k}(\delta)$, if one is willing to compute the additional scalar product \eqref{eq:tau}.

\begin{proposition} \label{prop:tau}
    Define $\tau_k(\delta) = \mb e_k^\ast Q_k(\delta)^\ast (H_k-\delta I_k)\mb e_k $. Then
    \begin{equation}
       \tau_k(\delta) = \mb{w}_{k-1}(\delta)^\ast(A-\delta I_n)\mb{v}_k (-1)^{k-1},\label{eq:tau}
    \end{equation}
    and
\begin{equation}
 s_k(\delta) = \frac{h_{k+1,k}}{\sqrt{h_{k+1,k}^2 + |\tau_k(\delta)|^2}} \quad \text{and} \quad
 c_k(\delta) = \frac{\tau_k(\delta)}{\sqrt{h_{k+1,k}^2 + |\tau_k(\delta)|^2}}. \label{eq:update1}
\end{equation}
\end{proposition}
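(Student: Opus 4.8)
The plan is to prove the two displayed identities \eqref{eq:tau} and \eqref{eq:update1} in sequence. I begin with \eqref{eq:update1}, because it follows almost immediately from the structure of the Givens rotations in Definition~\ref{def:givens} together with the interpretation of $\tau_k(\delta)$ as the last diagonal entry \emph{before} rotation. The key observation is this: by the nested construction of $Q_{k+1}(\delta)^\ast$, applying the block-diagonal matrix $\mathrm{diag}(Q_k(\delta)^\ast,1)$ to $\underline H_k - \delta\underline I_k$ triangularizes the top-left $k\times k$ block but leaves the last column only partially reduced. Concretely, the $k$th and $(k+1)$st entries of the $k$th column of $\mathrm{diag}(Q_k(\delta)^\ast,1)(\underline H_k-\delta\underline I_k)$ are exactly $\tau_k(\delta)$ and $h_{k+1,k}$. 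The final rotation $\Omega_{k+1}(\delta)$ must annihilate the $(k+1,k)$ entry and make the $(k,k)$ entry a positive real, i.e.\ it solves
\[
   \begin{bmatrix} \overline{c_k(\delta)} & s_k(\delta) \\ -s_k(\delta) & c_k(\delta) \end{bmatrix}
   \begin{bmatrix} \tau_k(\delta) \\ h_{k+1,k} \end{bmatrix}
   = \begin{bmatrix} \ast \\ 0 \end{bmatrix},
\]
with $s_k(\delta)\ge 0$. Reading off the bottom equation $-s_k(\delta)\tau_k(\delta)+c_k(\delta)h_{k+1,k}=0$ together with the normalization $s_k(\delta)^2+|c_k(\delta)|^2=1$ yields \eqref{eq:update1} directly.

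Next I would establish \eqref{eq:tau}, which rewrites $\tau_k(\delta)$ as an inner product involving the previous normalized GMRES residual. The approach is to start from the definition $\tau_k(\delta)=\mb e_k^\ast Q_k(\delta)^\ast(H_k-\delta I_k)\mb e_k$ and substitute the explicit last row of $Q_k(\delta)^\ast$ from Proposition~\ref{prop:QR}, namely $\mb e_k^\ast Q_k(\delta)^\ast = \tfrac{(-1)^{k-1}}{\sigma_{k-1}(\delta)}(q_0(\delta),\dots,q_{k-1}(\delta))$. This gives
\[
   \tau_k(\delta) = \frac{(-1)^{k-1}}{\sigma_{k-1}(\delta)}
   \bigl(q_0(\delta),\dots,q_{k-1}(\delta)\bigr)(H_k-\delta I_k)\mb e_k.
\]
On the other hand, I would expand the right-hand side of \eqref{eq:tau} using the Arnoldi relation \eqref{eq:arnoldistep} for $A\mb v_k = V_{k+1}\underline H_k\mb e_k$ and the expression \eqref{eq:respolnorm} for $\mb w_{k-1}(\delta)=\tfrac{1}{\sigma_{k-1}(\delta)}\sum_{j=0}^{k-1}\overline{q_j(\delta)}\mb v_{j+1}$. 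Using orthonormality $V_{k+1}^\ast V_{k+1}=I_{k+1}$ collapses the inner product to $\tfrac{1}{\sigma_{k-1}(\delta)}(q_0(\delta),\dots,q_{k-1}(\delta))$ acting on the first $k$ entries of $\underline H_k\mb e_k - \delta\mb e_k$, which are precisely the entries of $(H_k-\delta I_k)\mb e_k$. Matching the two expressions gives \eqref{eq:tau}.

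The main obstacle is the careful bookkeeping of indices and the $(-1)^{k-1}$ sign: one must verify that $\mb w_{k-1}(\delta)$ (an $n$-vector) paired against $(A-\delta I_n)\mb v_k$ produces exactly the same truncated polynomial-vector contraction that appears in the $Q_k(\delta)^\ast$ expression, with the subdiagonal entry $h_{k+1,k}$ correctly \emph{excluded} from the sum because $\mb w_{k-1}$ has no $\mb v_{k+1}$ component. Equivalently, one checks that $\mb w_{k-1}(\delta)^\ast(A-\delta I_n)\mb v_k = \mb e_k^\ast \underline H_k^\ast \cdots$ lands only in the first $k$ rows, so the $(k+1)$st row carrying $h_{k+1,k}$ does not contribute. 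Once the contraction is seen to involve only $(H_k-\delta I_k)\mb e_k$ and not the full $\underline H_k\mb e_k$, the sign and normalization fall into place and the identity \eqref{eq:tau} follows.
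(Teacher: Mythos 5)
Your proof is correct, but it splits into two halves of different character relative to the paper. For \eqref{eq:tau} you follow essentially the paper's route: the paper inserts $V_k^\ast V_k = I_k$ and uses the identity $V_k Q_k(\delta)\mb e_k = (-1)^{k-1}\mb w_{k-1}(\delta)$ (its equation \eqref{eq:20}, obtained from Proposition~\ref{prop:QR} and \eqref{eq:respolnorm}) before invoking the Arnoldi relation and $\mb w_{k-1}(\delta)\perp \mb v_{k+1}$; you instead write out the last row of $Q_k(\delta)^\ast$ and the expansion of $\mb w_{k-1}(\delta)$ explicitly and match the polynomial contractions termwise, with the same key observation that the $h_{k+1,k}\mb v_{k+1}$ term drops out. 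These are the same argument in different notation. For \eqref{eq:update1}, however, you take a genuinely different route. The paper stays in the GMRES vector picture: it proves the orthogonality property \eqref{eq:perp} from the GMRES minimization, takes inner products of the update formula \eqref{eq:update} with $(A-\delta I_n)\mb v_k$, and extracts $s_k\overline{\tau_k}(-1)^{k-1} + \overline{c_k}h_{k+1,k}(-1)^k = 0$. You instead argue purely matricially from Definition~\ref{def:givens}: the partially reduced matrix $\mathrm{diag}(Q_k(\delta)^\ast,1)(\underline H_k-\delta\underline I_k)$ has exactly $\tau_k(\delta)$ and $h_{k+1,k}$ in positions $(k,k)$ and $(k+1,k)$, and the final rotation $\Omega_{k+1}(\delta)$ must annihilate the latter subject to $s_k(\delta)\geq 0$, which (using $h_{k+1,k}>0$) pins down \eqref{eq:update1} and, as you note, automatically makes the new diagonal entry $\sqrt{h_{k+1,k}^2+|\tau_k(\delta)|^2}>0$. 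Your derivation is more elementary and self-contained — it needs neither \eqref{eq:update} nor \eqref{eq:perp}, and it exposes why $\tau_k(\delta)$ and $h_{k+1,k}$ are precisely the two numbers the last Givens rotation acts on, in the spirit of classical progressive GMRES implementations. What the paper's argument buys instead is that it operates entirely on the vectors $\mb w_{k-1}(\delta)$, $\mb v_{k+1}$ actually computed by the algorithm, reusing results already established, which fits its narrative that the rotation parameters are obtainable from a single extra inner product \eqref{eq:tau} without ever forming a $QR$ factorization.
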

\begin{proof}
  From Proposition~\ref{prop:QR} and \eqref{eq:respolnorm} it follows that
 \begin{eqnarray}
V_k Q_k(\delta) \mb{e}_k &=& V_k \left(\begin{array}{c}
                             \overline{q_0(\delta)} \\
                             \vdots \\
                             \overline{q_{k-1}(\delta)}
                            \end{array}\right) \frac{(-1)^{k-1}}{\sigma_{k-1}(\delta)}\nonumber \\
                    &=& \frac{(-1)^{k-1}}{\sigma_{k-1}(\delta)} \sum_{i=1}^k \mb{v}_i \overline{q_{i-1}(\delta)}
                    = (-1)^{k-1}\mb{w}_{k-1}(\delta) . \label{eq:20}
 \end{eqnarray}
 Therefore,
\begin{eqnarray*}
 \tau_k(\delta) &=& \mb{e}_k^\ast Q_k(\delta)^\ast (H_k-\delta I_k)\mb{e}_k \\
                &=& \mb{e}_k^\ast Q_k(\delta)^\ast V_k^\ast V_k (H_k-\delta I_k)\mb{e}_k \\
                &=& \mb{w}_{k-1}(\delta)^\ast (A\mb{v}_k - h_{k+1,k}\mb{v}_{k+1} - \delta \mb{v}_k)(-1)^{k-1} \\
                &=& \mb{w}_{k-1}(\delta)^\ast (A-\delta I_n)\mb{v}_k (-1)^{k-1},
\end{eqnarray*}
establishing \eqref{eq:tau}. We now claim that
\begin{equation}
 \mb{w}_k(\delta) \perp (A-\delta I_n)\mb{v}_j, \quad \text{for} \,\,\, 1 \leq j \leq k, \label{eq:perp}
\end{equation}
which is a direct consequence of \eqref{eq:res}:
\begin{equation*}
 \mb{r}_k(\delta) = \mb{r}_0(\delta) - \mb{u}_k \quad \text{with} \quad \mb{u}_k = \text{argmin}_{\mb{u} \in \mathcal{K}} \|\mb{r}_0(\delta) - \mb{u}\|,
\end{equation*}
where $\mathcal{K} = (A-\delta I_n)\text{span}\{\mb{v}_1, \ldots, \mb{v}_k \}$.

It remains to prove \eqref{eq:update1}.
Taking inner products with $(A-\delta I_n)\mb{v}_k$ in all terms of \eqref{eq:update} and making use of \eqref{eq:tau} and \eqref{eq:perp}, results in
\begin{equation}
 0 =  s_k(\delta) \overline{\tau_k(\delta)} (-1)^{k-1} + \overline{c_k(\delta)} h_{k+1,k} (-1)^k.
\end{equation}
Together with $s_k^2(\delta) + |c_k(\delta)|^2 = 1$, this yields \eqref{eq:update1}.
\end{proof}

\begin{example}\label{ex:unitary3}
   Let us return to the particular case of $\delta=0$ and unitary $A$ as discussed in Examples~\ref{ex:unitary1} and~\ref{ex:unitary2}.
   Inserting \eqref{eq:arnoldi} and \eqref{eq:unigmres} in \eqref{eq:update} and identifying the underlying polynomials gives
   \begin{equation} \label{eq:szegoe1}
        q_k^*(z)= s_k(0) q_{k-1}^*(z)  + (-1)^k \overline{c_k(0)} q_{k}(z).
   \end{equation}
   Since $q_{k-1}^*(z)-q_{k-1}^*(0)$ is $z$ times a polynomial of degree $<k-1$ and $q_0(z)=1$, we get from \eqref{eq:tau} that
   \begin{eqnarray*}
      \tau_k(0)&=&  (-1)^{k-1} \, \langle z q_{k-1}, q_{k-1}^* \rangle_{A,\mb v_1}
      \\&=&
      (-1)^{k-1} \, \overline{q_{k-1}^*(0)} \langle z q_{k-1}, q_0 \rangle_{A,\mb v_1}
           \\&=& (-1)^{k-1} \, \sigma_{k-1}(0) h_{1,k} = c_k(0)
   \end{eqnarray*}
   and $h_{k+1,k}=s_k(0)$, where we applied \eqref{eq:reversedeq} and
   \eqref{eq:some_entries}. Notice that this simplification for unitary $A$ is in accordance with \eqref{eq:update1}. Taking the star operation in \eqref{eq:szegoe1} gives the second relation
   \begin{equation} \label{eq:szegoe2}
        q_k(z)= s_k(0) z q_{k-1}(z)  + (-1)^k c_k(0) q_{k}^*(z).
   \end{equation}
   We should mention that, in case of unitary $A$, the scalar product \eqref{eq:scalar_product} can be written as a scalar product in terms of a (discrete)
   measure supported on the unit circle. Thus we have the whole theory  of orthogonal polynomials on the unit circle, and in particular relations
   \eqref{eq:szegoe1} and \eqref{eq:szegoe2} are known as the Szeg\H{o} recurrence relations, a coupled two-term recurrence for orthonormal polynomials on
   the unit circle \cite[Formula 1.2-1.7]{JaRe93}.
\end{example}

\section{The Arnoldi process for $BML$-matrices} \label{sec:rec}

We will establish a fast variant of the Arnoldi process which is applicable to the class of $BML$-matrices as described by formula \eqref{eq:introeq}.
In \S \ref{sec:hess} we will use the explicit representation of $(H_{k+1}-\delta I_{k+1})^{-*}$ in terms of orthogonal polynomials as stated in \eqref{eq:resolvent}
to demonstrate that the underlying Hessenberg matrix has
an upper-separable structure. The assumption on simple poles makes it possible to easily express generators for the upper-separable structure in polynomial language. In particular,
the GMRES residual vectors, which can be expressed in terms of orthogonal polynomials by \eqref{eq:respolnorm}, are showing up.
In \S \ref{sec:shortbml} we will see that the orthonormal basis vectors (up to a correction term incorporating the low rank perturbation in \eqref{eq:introeq}) can be written as a
linear combination of previously computed basis vectors and GMRES residual vectors.
In \S \ref{sec:pseudocode} the results from \S \ref{sec:shortbml} and \S \ref{sec:prog}, enabling to compute the necessary GMRES residual vectors progressively, will be combined into a new Arnoldi
iteration for $BML$-matrices.

\subsection{Structure formula for $BML$-matrices} \label{sec:hess}

 A rank structure revealing formula for $BML$-matrices will be derived in terms of the orthogonal polynomials $q_k$ defined in \S\ref{subsec:OP}.
 This formula \eqref{eq:hess} will be the key for the design of short recurrence relations in the following subsection. We first show that
 the $BML$-structure \eqref{eq:introeq} is inherited by the underlying Hessenberg matrix.

 \begin{proposition}\label{prop:heritage}
    Let $A$ be an invertible matrix satisfying relation \eqref{eq:introeq}, and denote by $N\leq n$ the Arnoldi termination index. Then $H_N$ is also a $BML$-matrix with the same
    polynomials $p,q$ and indices $\widetilde m_1 = m_1$, $\widetilde m_2 = m_2$ and $\widetilde m_3 \leq m_3$. More precisely,
    \begin{equation} \label{eq:hessstruct}
          H_N^\ast = p(H_N)q(H_N)^{-1} + F_N G_N^\ast
    \end{equation}
    where $F_k: = V_k^\ast F$ and $G_k := V_k^\ast G$.
 \end{proposition}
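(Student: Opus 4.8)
The plan is to exploit that at the termination index the Krylov space becomes $A$-invariant, and then simply compress the defining relation \eqref{eq:introeq} by the orthonormal basis $V_N$, tracking the two summands separately. First I would record the consequences of the breakdown $\mb v_{N+1}=0$: relation \eqref{eq:arnoldistep} collapses to $AV_N=V_NH_N$ with $V_N^\ast V_N=I_N$, so $\mathcal K:=\operatorname{range}(V_N)$ is $A$-invariant and $H_N=V_N^\ast AV_N$. An easy induction (base case \eqref{eq:exactness}, step using $AV_N=V_NH_N$) upgrades this to $\phi(A)V_N=V_N\phi(H_N)$ for every polynomial $\phi$. Taking adjoints of $H_N=V_N^\ast AV_N$ gives $H_N^\ast=V_N^\ast A^\ast V_N$, and substituting \eqref{eq:introeq} yields
\[
   H_N^\ast = V_N^\ast\, p(A)q(A)^{-1}\, V_N + V_N^\ast FG^\ast V_N .
\]

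The low-rank summand is immediate: by the definitions of $F_N$ and $G_N$ one has $V_N^\ast FG^\ast V_N=(V_N^\ast F)(V_N^\ast G)^\ast=F_NG_N^\ast$. Since $F_N,G_N\in\mathbb C^{N\times m_3}$, the perturbation has rank at most $m_3$; re-factoring $F_NG_N^\ast=\widetilde F_N\widetilde G_N^\ast$ with full-column-rank factors lets me take $\widetilde m_3=\operatorname{rank}(F_NG_N^\ast)\le m_3$, which accounts for the claimed inequality.

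The rational summand is the heart of the matter, and the one delicate point is passing the inverse $q(A)^{-1}$ through $V_N$. I would argue as follows: $\mathcal K$ is $A$-invariant, hence $q(A)$-invariant, and $q(A)$ is invertible on $\mathbb C^n$ (it appears in \eqref{eq:introeq}); an injective endomorphism of the finite-dimensional space $\mathcal K$ is bijective, so the restriction $q(A)|_{\mathcal K}$ is invertible and, together with its inverse, leaves $\mathcal K$ invariant. Its matrix in the basis $V_N$ is $q(H_N)$ (from $q(A)V_N=V_Nq(H_N)$), so $q(H_N)$ is invertible and $q(A)^{-1}V_N=V_Nq(H_N)^{-1}$. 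Combining with $p(A)V_N=V_Np(H_N)$ gives
\[
   p(A)q(A)^{-1}V_N = p(A)V_Nq(H_N)^{-1} = V_N\,p(H_N)q(H_N)^{-1},
\]
and left-multiplying by $V_N^\ast$ (using $V_N^\ast V_N=I_N$) produces $V_N^\ast p(A)q(A)^{-1}V_N=p(H_N)q(H_N)^{-1}$.

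Substituting the two evaluations into the displayed identity yields exactly \eqref{eq:hessstruct}. Since the polynomials $p,q$ are untouched, the indices $\widetilde m_1=m_1$, $\widetilde m_2=m_2$ and the simplicity of the roots of $q$ carry over verbatim, and only $\widetilde m_3\le m_3$ is permitted to shrink. The only step I expect to require real care is the invertibility of $q(H_N)$ and the commutation $q(A)^{-1}V_N=V_Nq(H_N)^{-1}$; everything else is formal compression. Equivalently one may phrase that obstacle spectrally: the eigenvalues of $H_N=V_N^\ast AV_N$ lie in $\Lambda(A)$, on which $q$ does not vanish, so $q(H_N)^{-1}$ is well defined.
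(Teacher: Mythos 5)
Your proposal is correct and follows essentially the same route as the paper's proof: use the termination relation $AV_N=V_NH_N$, upgrade it by induction to $\pi(A)V_N=V_N\pi(H_N)$, deduce invertibility of $q(H_N)$ from that of $q(A)$ so that $q(A)^{-1}V_N=V_Nq(H_N)^{-1}$, and then compress \eqref{eq:introeq} by $V_N$. Your restriction-to-invariant-subspace (and alternative spectral) justification for the invertibility of $q(H_N)$, and your explicit re-factoring argument for $\widetilde m_3\le m_3$, merely spell out details the paper states more tersely.
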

 \begin{proof}
    By definition of $N$ we have $AV_N = V_N H_N$, i.e., the columns of $V_N$ span an invariant subspace of $A$. By recurrence on the degree one shows that
    $\pi(A)V_N = V_N \pi(H_N)$, or $\pi(H_N) = V_N^\ast \pi(A) V_N$, for any polynomial $\pi$. Moreover, if $\pi(A)$ is of full rank, then so is $\pi(H_N)$.
    In particular, since we assume $q(A)$ to be invertible, so is the matrix $q(H_N)$. Hence, $V_N q(H_N)^{-1} = q(A)^{-1} V_N$. This implies that
    \begin{eqnarray*}
       H_N^\ast - p(H_N)q(H_N)^{-1}
       &=&
       V_N^\ast ( A^\ast - p(A)q(A)^{-1} ) V_N
       \\&=& V_N^\ast F G^\ast V_N,
    \end{eqnarray*}
    as claimed in \eqref{eq:hessstruct}.
 \end{proof}

A combination with Corollary~\ref{cor:semisep2} gives us the following result.

\begin{corollary}\label{cor:semisep3}
    Let $A$ be a matrix satisfying relation \eqref{eq:introeq}. Denote by $N\leq n$ the Arnoldi termination index, and $m:=\max(0,m_1-m_2+1)$. Then $H_N \in \mathbb{C}^{N \times N}$ is
    $(m,m_2+m_3)-$upper-separable. More precisely,
    \begin{align}
     \left((H_N)_{k,\ell}\right)_{\ell = k+m, \ldots, N} &= \sum_{j=1}^{m_2} \overline{d_j q_{k-1}(z_j)}
     \left( \left(( H_N - z_j I_N)^{-*}\right)_{1, \ell} \right)_{\ell = k+m, \ldots, N} \nonumber
     \\ &\phantom{=} + \left(\left(G_N F_N^\ast\right)_{k, \ell}\right)_{\ell = k+m, \ldots, N}, \label{eq:structformula}
    \end{align}
    where $z_1, \ldots, z_{m_2}$ denote the poles of the rational function $p(z)/q(z)$.
\end{corollary}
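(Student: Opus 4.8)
The plan is to combine the inherited $BML$-structure \eqref{eq:hessstruct} from Proposition~\ref{prop:heritage} with the resolvent expansion \eqref{eq:resolvent} from Corollary~\ref{cor:semisep2}, after splitting the rational function $p/q$ into a polynomial part and simple partial fractions. Since the roots $z_1, \ldots, z_{m_2}$ of $q$ are simple, I would first write
\[
   \frac{p(z)}{q(z)} = s(z) + \sum_{j=1}^{m_2} \frac{d_j}{z - z_j},
\]
where $s$ is a polynomial of degree $\max(0, m_1 - m_2)$ (with $s \equiv 0$ when $m_1 < m_2$) and the $d_j$ are the residues. Substituting $H_N$ into \eqref{eq:hessstruct} and taking the conjugate transpose then yields
\[
   H_N = s(H_N)^\ast + \sum_{j=1}^{m_2} \overline{d_j}\,(H_N - z_j I_N)^{-\ast} + G_N F_N^\ast,
\]
where invertibility of each $H_N - z_j I_N$ is guaranteed because $q(H_N)$ is invertible by Proposition~\ref{prop:heritage}.

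Next I would analyze the three terms restricted to the region $\ell \geq k + m$, with $m = \max(0, m_1 - m_2 + 1)$. Since $H_N$ is upper Hessenberg, $s(H_N)$ vanishes below its $(\deg s)$-th subdiagonal, hence $s(H_N)^\ast$ vanishes above its $(\deg s)$-th superdiagonal; as $\deg s < m$ in both regimes $m_1 \geq m_2$ and $m_1 < m_2$, this term contributes nothing once $\ell - k \geq m$. For the resolvent terms I would invoke \eqref{eq:resolvent} with $\delta = z_j$ and matrix size $N$, writing each $(H_N - z_j I_N)^{-\ast}$ as the sum of $(q_0(z_j), \ldots, q_{N-1}(z_j))^\ast\,\mb{e}_1^\ast (H_N - z_j I_N)^{-\ast}$ and a strictly lower triangular remainder. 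The remainder has zero entries for $\ell \geq k$, so in the region $\ell \geq k + m$ the $(k,\ell)$-entry equals $\overline{q_{k-1}(z_j)}\,\bigl((H_N - z_j I_N)^{-\ast}\bigr)_{1,\ell}$; multiplying by $\overline{d_j}$ and summing over $j$ produces exactly the first term of \eqref{eq:structformula}, while $G_N F_N^\ast$ is carried along unchanged. This establishes \eqref{eq:structformula}.

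Finally, for the upper-separability claim I would simply read off the rank. On and above the $m$-th diagonal, $H_N$ agrees entrywise with a sum of $m_2$ rank-one outer products (one per pole, with column generator $\bigl(\overline{d_j q_{k-1}(z_j)}\bigr)_k$ and row generator $\bigl((H_N - z_j I_N)^{-\ast}\bigr)_{1,\ell}$) plus the rank-$m_3$ term $G_N F_N^\ast$. Because $m \geq 0$, the submatrix $H_N(1:j,\,j+m:N)$ from Definition~\ref{def:upper_separable} lies entirely in the region $\ell \geq k + m$, so it is a submatrix of a structure of rank at most $m_2 + m_3$; this yields the asserted $(m, m_2+m_3)$-upper-separability.

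I expect the main obstacle to be the degree and bandwidth bookkeeping that pins down the offset $m = \max(0, m_1 - m_2 + 1)$ exactly: verifying that the polynomial remainder $s(H_N)^\ast$ is annihilated on and above the $m$-th diagonal in both the $m_1 \geq m_2$ and $m_1 < m_2$ cases, and checking that the strictly lower triangular remainders coming from \eqref{eq:resolvent} never reach into this region. Once these two band arguments are in place, the rank count and the structure formula follow directly.
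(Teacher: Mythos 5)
Your proposal is correct and follows essentially the same route as the paper: partial fraction decomposition of $p/q$ with simple poles, substitution of $H_N$ into \eqref{eq:hessstruct} followed by taking adjoints, the resolvent expansion \eqref{eq:resolvent} to isolate the rank-one generators per pole, and the observation that the polynomial part $\pi(H_N)^\ast$ (your $s(H_N)^\ast$) and the strictly lower triangular remainders vanish on and above the $m$th diagonal. Your write-up is in fact somewhat more explicit than the paper's on the invertibility of each $H_N - z_j I_N$ and on the bandwidth bookkeeping in the two regimes $m_1 \geq m_2$ and $m_1 < m_2$.
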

\begin{proof}
    The fact that $H_N$ is $(m,m_2+m_3)-$upper-separable is already known from \cite{mertensvandebril}. Below we give an alternative, more constructive, proof
    leading to explicit generators for the low rank part of $H_N$.
    By assumption, the rational function $p/q$ in \eqref{eq:introeq} has simple poles and thus has the partial fraction decomposition
    \begin{equation*}
        \frac{p(z)}{q(z)} = \sum_{j=1}^{m_2} \frac{d_j}{z-z_j} + \pi(z),
    \end{equation*}
    for some constants $z_j$, $d_j$ and a polynomial $\pi$ of degree $m_1-m_2$ ($\pi=0$ if $m_1 < m_2$). Replacing $z$ by $H_N$, taking
    adjoints and using \eqref{eq:hessstruct} and \eqref{eq:resolvent} leads to
    \begin{eqnarray} && H_N - G_N F_N^* - \pi(H_N)^*
    \nonumber
    \\&=& \sum_{j=1}^{m_2} \overline{d_j} ( H_N - z_j I_N)^{-*}
    \nonumber
    \\&=& L_N + \sum_{j=1}^{m_2} \overline{d_j}
    \Bigl(q_0(z_j),\ldots,q_{N-1}(z_j)\Bigr)^\ast \mb e_1^\ast ( H_N - z_j I_N)^{-*},
    \label{eq:hess}
    \end{eqnarray}
    with a strictly lower triangular matrix $L_N$.
    Finally, according to the upper Hessenberg structure of $H_N$, the matrix $\pi(H_N)^*$ has zero entries on and above the $m$th diagonal, establishing the
    upper-separable structure and formula \eqref{eq:structformula}.
 \end{proof}

\begin{remark}
The assumption on simple poles is not necessary for $H_N$ to have an upper-separable structure.  However, once we drop this constraint, it
is not
clear whether or not there exists a link between the generators of the low-rank structure and the GMRES residual vectors. We refer to \cite{mertensvandebril}
for more information on this topic.

Note that $H_k$ for $k< N$ also has an upper-separable structure as it is a leading principal minor (submatrix) of $H_N$.
However, $H_k$ does not satisfy the same matrix equation as $H_N$.
\end{remark}

\subsection{Short recurrence relations for $BML$-matrices} \label{sec:shortbml}

We will now derive a short recurrence relation for $BML$-matrices. To do so, the vector
 \begin{equation} \label{eq:defvk}
          \mb v'_k := A \mb v_k - V_{k-m} G_{k-m} F^* \mb v_{k},
    \end{equation}
   is introduced, which is equal to $A\mb v_k$ up to a correction term induced by the low rank perturbation in \eqref{eq:introeq}. In \cite{BeckReich} it is
   shown that for nearly Hermitian $A$, i.e., $p(z) = z$ and $q(z) = 1$ in \eqref{eq:introeq}, the Arnoldi vectors satisfy a three term recurrence relation.
   More specifically,
 \begin{equation}
  \mb{v}_k' = h_{k-1,k} \mb{v}_{k-1} + h_{k,k} \mb{v}_k + h_{k+1} \mb{v}_{k+1,k}, \label{eq:nearlyherm}
 \end{equation}
where $h_{k-1,k} = \mb{v}_{k-1}^\ast \mb{v}_k'$, $h_{k,k} = \mb{v}_{k}^\ast \mb{v}_k'$ and $h_{k+1,k} = \mb{v}_{k+1}^\ast \mb{v}_k'$ are entries of the underlying Hessenberg
matrix. We refer to \cite{BeckReich} for a detailed discussion.
   For a general $BML$-matrix, Proposition \ref{prop:short_rec} states that $\mb v_k'$ is a linear combination of GMRES residual vectors and Arnoldi vectors, including
   $\mb{v}_{k+1}$. As we will discuss below, this results in a short recurrence relation which reduces to \eqref{eq:nearlyherm} in the specific case of nearly
   Hermitian matrices and which can be used to compute the Arnoldi vectors in an efficient way.

\begin{proposition}\label{prop:short_rec}
    Let $A$ be a matrix satisfying relation \eqref{eq:introeq}, and denote by $N\leq n$ the Arnoldi termination index, and $m:=\max(0,m_1-m_2+1)$.
    Then for $m < k \leq N$, the vector $\mb v_k'$ as defined by \eqref{eq:defvk}
    can be written as a linear combination of $\mb v_j$ for $j=k-m+1,...,k+1$ and of $\mb w_{k-m-1}(z_j)$ for $j=1,...,m_2$. More precisely,
    \begin{equation} \mb{v}'_k = \sum_{j=1}^{m_2} a_{j,k} \mb{w}_{k-m-1}(z_j) + \sum_{j=k-m+1}^{k+1} h_{j,k} \mb{v}_j, \label{eq:8} \end{equation}
    for some constants $a_{j,k}$, $h_{j,k}$ entries of the underlying Hessenberg matrix $H_N$, and $m < k \leq N$.
\end{proposition}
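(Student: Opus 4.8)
The plan is to expand $\mb v_k'$ in the orthonormal Arnoldi basis and read off its coefficients by combining the Arnoldi relation with the structure formula of Corollary~\ref{cor:semisep3}. First I would note that $A\mb v_k = \sum_{j=1}^{k+1} h_{j,k}\mb v_j$ by \eqref{eq:arnoldistep}, while the correction term $V_{k-m}G_{k-m}F^*\mb v_k$ lies in $\mathrm{span}\{\mb v_1,\ldots,\mb v_{k-m}\}$. Hence $\mb v_k' \in \mathrm{span}\{\mb v_1,\ldots,\mb v_{k+1}\}$, and it suffices to determine the coefficients $c_j := \mb v_j^*\mb v_k'$ for $j=1,\ldots,k+1$.

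For the near-diagonal indices $k-m < j \leq k+1$ I would use $\mb v_j^* V_{k-m}=0$ to conclude $c_j = \mb v_j^* A\mb v_k = h_{j,k}$, which supplies the second sum in \eqref{eq:8}. For the remaining indices $1 \leq j \leq k-m$, using $V_{k-m}^*V_{k-m}=I$, $G_{k-m}=V_{k-m}^*G$ and $F^*\mb v_k = (F_N)_{k,:}^*$, I would show that $\mb v_j^*V_{k-m}G_{k-m}F^*\mb v_k = (G_N F_N^*)_{j,k}$, so that $c_j = (H_N)_{j,k} - (G_N F_N^*)_{j,k}$.

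The key step is to invoke Corollary~\ref{cor:semisep3}: since $j \leq k-m$ is equivalent to $k \geq j+m$, the entry $(H_N)_{j,k}$ sits on or above the $m$th diagonal, so the structure formula \eqref{eq:structformula}, read with row index $j$ and column index $k$, gives $c_j = \sum_{i=1}^{m_2}\overline{d_i\,q_{j-1}(z_i)}\bigl((H_N-z_iI_N)^{-*}\bigr)_{1,k}$. Substituting into $\sum_{j=1}^{k-m} c_j\mb v_j$ and interchanging the order of summation isolates, for each pole $z_i$, the vector $\sum_{\ell=0}^{k-m-1}\overline{q_\ell(z_i)}\,\mb v_{\ell+1}$, which by \eqref{eq:respolnorm} equals $\sigma_{k-m-1}(z_i)\,\mb w_{k-m-1}(z_i)$. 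Collecting everything yields \eqref{eq:8} with $a_{i,k} = \overline{d_i}\,\sigma_{k-m-1}(z_i)\bigl((H_N-z_iI_N)^{-*}\bigr)_{1,k}$.

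The main obstacle is careful index bookkeeping: the structure formula \eqref{eq:structformula} describes one fixed row of $H_N$ sweeping across its upper-right columns, whereas here I need one fixed column ($k$) sweeping across its top rows ($j=1,\ldots,k-m$). I must therefore verify that the required condition $\text{column} \geq \text{row}+m$ holds precisely on this range, with equality exactly at $j=k-m$, and that the residual index satisfies $k-m-1 \geq 0$ under the hypothesis $m<k$. Once the indices are aligned, the identification of the inner sum with the GMRES residual via \eqref{eq:respolnorm} is immediate, and the constants $a_{i,k}$ are obtained directly.
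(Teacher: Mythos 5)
Your proposal is correct and follows essentially the same route as the paper's proof: both split $\mb v_k'$ into the near-diagonal part with coefficients $h_{j,k}$ and the projection onto $\mathrm{span}\{\mb v_1,\ldots,\mb v_{k-m}\}$, identify the latter's coefficients as the upper entries of the $k$th column of $H_N-G_NF_N^*$, apply the structure formula \eqref{eq:structformula} of Corollary~\ref{cor:semisep3}, and recognize the resulting sums as GMRES residuals via \eqref{eq:respolnorm}, arriving at the same constants $a_{j,k}=\overline{d_j}\,\sigma_{k-m-1}(z_j)\bigl((H_N-z_jI_N)^{-*}\bigr)_{1,k}$. The only difference is presentational: you work entry-wise with the coefficients $c_j=\mb v_j^*\mb v_k'$, whereas the paper phrases the same computation in matrix form using the projector $V_{k-m}V_{k-m}^*$ and the embedding $\widehat I_{k-m}$.
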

\begin{proof}
    Notice that by construction,
    \[
       \mb v''_k  := \mb v'_k  - \sum_{j=k-m+1}^{k+1} h_{j,k} \mb v_j
    \]
    lies in the Krylov space spanned by the columns of $V_{k-m}$. As a result we have
    \[
    \mb v''_k=V_{k-m}V_{k-m}^\ast \mb v''_k=V_{k-m}V_{k-m}^\ast \mb v'_k.
    \]
    Define
    \[
    \widehat I_{k-m} := \left[\begin{array}{c} I_{k-m}\\ \mb{0}\\ \vdots\\ \mb{0} \end{array} \right] \in \mathbb{C}^{N \times (k-m)}.
    \]
    Then $V_{k-m} = V_N \widehat I_{k-m}$. Combining the above yields
    \begin{eqnarray*}
          \mb v'_k &=& \mb{v}_k' - \mb{v}_k'' + V_{k-m}V_{k-m}^\ast \mb v'_k
          \\&=&
          \mb v_k' - \mb v_k'' + V_{k-m} \left( V_{k-m}^* A V_{N} - V_{k-m}^* G F_N^* \right) \mb e_k
          \\&=&
          \mb v_k' - \mb v_k'' + V_{k-m}  \widehat I_{k-m}^\ast \left(H_N-G_N F_N^* \right) \mb e_k.
    \end{eqnarray*}
   The vector $\widehat I_{k-m}^\ast \left(H_N-G_N F_N^*\right) \mb e_k \in \mathbb{C}^{k-m}$ consists of the first $k-m$ components of the $k$th column of $H_N-G_N F_N^*$.
    As the entries of $L_N$ and $\pi(H_N)^\ast$ in \eqref{eq:hess} are zero on and above the $m$th diagonal, it follows that
    \begin{equation*}
     \widehat I_{k-m}^\ast \left(H_N-G_N F_N^*\right) \mb e_k = \sum_{j=1}^{m_2} \overline{d_j} \left( q_0(z_j),...,q_{k-m-1}(z_j) \right)^\ast \mb{e}_1(H_N -z_j I_N)^{-\ast}\mb{e}_k,
    \end{equation*}
    which by \eqref{eq:respolnorm} can be rewritten as
    \begin{equation*}
     V_{k-m}\widehat I_{k-m}^\ast \left(H_N-G_N F_N^*\right) \mb e_k = \sum_{j=1}^{m_2} a_{j,k} \mb w_{k-m-1}(z_j),
    \end{equation*}
    with $a_{j,k} := \overline{d_j} \sigma_{k-m-1}(z_j) \mb{e}_1(H_N -z_j I_N)^{-\ast}\mb{e}_k$.
As a result, $\mb v'_k$ can be written as $\mb v_k' - \mb v_k''$, a linear combination of
    $\mb v_{k+1},...,\mb v_{k-m+1}$ (with coefficients being entries of the underlying Hessenberg matrix), plus a linear combination
    of $\mb w_{k-m-1}(z_1),...,\mb w_{k-m-1}(z_m)$.
\end{proof}

\begin{remark}\label{rem:short_rec}
   Proposition~\ref{prop:short_rec} remains true if we replace  $\mb w_{k-m-1}(z_j)$ by $\mb w_{\ell}(z_j)$ or/and $V_{k-m}G_{k-m}$ by $V_{\ell+1}G_{\ell+1}$ for
   $\ell  \in \{k-m-1,k-m,...,k\}$. This is the direct result of the observation that both $V_{k-m}G_{k-m}F^\ast \mb{v}_k - V_{\ell+1}G_{\ell+1} F^\ast \mb{v}_k $ and
   $\mb{w}_{k-m-1}(z_j) - \sigma_\ell(z_j)/\sigma_{k-m-1}(z_j)\mb{w}_\ell(z_j) $ are elements of $\mbox{span}(\mb v_{k-m+1}, \ldots, \mb{v}_{\ell+1})$ if $\ell > k-m-1$
   and equal to zero if $\ell = k-m-1$.
   However, choosing $\ell \neq k-m-1$ causes to lose orthogonality between $\mb v_k' - \mb v_k''$ and $\mb v_k''$ and to loose the link with the entries
   of the underlying Hessenberg matrix in \eqref{eq:8}.
\end{remark}

%
%

Next, let us show how the recurrence relation of Proposition~\ref{prop:short_rec} reduces to the well-known Szeg\H{o} recurrence if the matrix under consideration is unitary.

\begin{example}\label{ex:unitary4}
   Let us return to the particular case of $\delta=0$ and unitary $A$ as discussed in Examples~\ref{ex:unitary1},~\ref{ex:unitary2}, and~\ref{ex:unitary3}.
   Inserting \eqref{eq:arnoldi} and \eqref{eq:unigmres} in the second Szeg\H{o} relation \eqref{eq:szegoe2}
   leads to
   \[
        \mb v_{k+1}= s_k(0) A \mb v_{k}  + (-1)^k c_k(0) \mb w_{k}(0)
   \]
   which is exactly the variant with $\ell=k, m_2=m_3=1, m_1=m=0$ of Proposition~\ref{prop:short_rec} discussed in Remark~\ref{rem:short_rec}.
\end{example}

\begin{remark} \label{remark:wellfree}
 In \cite{bella} a class of matrices, called $(H,m)$-well-free matrices is investigated, and it is established that these matrices satisfy the recurrence relation \eqref{eq:multi} with $m_1=m_2, m_3=0$.
 Briefly described, these matrices form a subset of
 upper-separable Hessenberg matrices which satisfy an additional constraint preventing a breakdown in the recurrence relation \eqref{eq:multi}.
 Intuitively, this additional ``well-free'' constraint signifies that
 there are no rank deficiencies encountered in the low rank part of the upper-separable Hessenberg matrix. The problem of breakdown is discussed in \cite{barthmanteuffel} and
 \cite{mertensvandebril}, where it is overcome by making
 use of a set of several multiple recurrence relations instead of the single recurrence relation \eqref{eq:multi}, and the use of an algorithm based on \eqref{eq:multi} which provides a set of column
 vectors to generate the low rank structure
 of the underlying Hessenberg matrix, respectively. We will, however, not need
 any well-free constraint to prevent a breakdown in the recurrence relation stated in Proposition~\ref{prop:short_rec}, as our approach does not impose any
 limitations on the matrix structure beyond \eqref{eq:introeq}.

\end{remark}


\subsection{The algorithm} \label{sec:pseudocode}
Algorithm \ref{alg:bml}, which we will name \emph{Fast Arnoldi} throughout the text,  describes a fast variant of the Arnoldi algorithm for $BML$-matrices based on Proposition \ref{prop:short_rec}. We will give a short
description of each of the components of the algorithm and print the corresponding piece of pseudocode.

The idea is to make alternate use of the recurrence relations
\begin{eqnarray}
 \mb{v}'_k &=& \sum_{j=1}^{m_2} a_{j,k} \mb{w}_{k-m-1}(z_j) + \sum_{j=k-m+1}^{k+1} h_{j,k} \mb{v}_j, \label{eq:computenextv} \\
 \mb w_k(z_j) &=& s_k(z_j) \mb{w}_{k-1}(z_j) + (-1)^k \overline{c_k(z_j)} \mb{v}_{k+1}. \label{eq:computenextw}
\end{eqnarray}

The recurrence relation \eqref{eq:computenextv} is used to compute the next orthonormal basis vector of the Krylov subspace as a linear combination of previously
computed orthonormal vectors as well as GMRES residual vectors, while the recurrence relation \eqref{eq:computenextw} is
used to update the GMRES residual vectors once a new orthonormal vector is retrieved. As \eqref{eq:computenextv} is only valid for $k > m$, the first $m$ orthonormal
vectors are computed by means of the classical Arnoldi iteration.

Each time the relation \eqref{eq:computenextv} is employed, the vector $\mb{v}_k'$ is formed, causing products between vectors and matrices to be computed. The total complexity
to compute $\mb{v}_k'$ is $\mathcal{O}(m_3 n) + \mathcal{O}(n^2)$. \newline
\begin{tabular}{c}
\begin{algorithm*}[H]
 $\hat F_{k} := \mb{v}_k^\ast F$; $\hat G_{k-m} := \mb{v}_{k-m}^\ast G$\;
  $\widetilde G := \widetilde G + \mb{v}_{k-m} \hat G_{k-m}$\;
  $\mb{v}' := A \mb{v}_k - \widetilde G \hat F^\ast_{k}$\;
\end{algorithm*}
\end{tabular}

The coefficients $a_{j,k}$ and $h_{j,k}$ in \eqref{eq:computenextv} are the solution of the least squares problem
\begin{equation*}
 \mb{v}'_k = \left[\mb{w}_{k-m-1}(z_1), \ldots, \mb{w}_{k-m-1}(z_{m_2}), \mb{v}_{k-m+1}, \ldots, \mb{v}_{k+1}\right] \left[ \begin{array}{c}
                                                                                 a_{1,k}\\
                                                                                 \vdots \\
                                                                                 a_{m_2,k} \\
                                                                                 h_{k-m+1,k} \\
                                                                                 \vdots \\
                                                                                 h_{k+1,k}
                                                                                \end{array}\right].
\end{equation*}
Note that $\mb{v}_i \perp \text{span}\{\mb{w}_{k-m-1}(z_1), \ldots, \mb{w}_{k-m-1}(z_{m_2}) \}$ for all $k-m+1 \leq i \leq k+1$, allowing to solve the above least
squares problem without knowing $\mb{v}_{k+1}$ in advance. To shorten notation in subsequent discussions, we define
$M_k := \left[ \mb{w}_{k-m-1}(z_1), \ldots, \mb{w}_{k-m-1}(z_{m_2}) \right]$.

Each of the coefficients $h_{j,k}$ are entries of the $k$th column of the corresponding Hessenberg matrix and are computed as $\mb{v}_j^\ast \mb{v}_k'$, which has a
computational complexity of $\mathcal{O}(mn)$. \newline
\begin{tabular}{c}
  \begin{algorithm*}[H]
 \For{$j= k, k-1, \ldots, k-m+1$}{
   $h_{j,k} := \mb{v}_j^\ast \mb{v}'$; $\mb{v}' := \mb{v}' - h_{j,k} \mb{v}_j$\;
  }
\end{algorithm*}
\end{tabular}

Next, a $QR$-decomposition of the matrix $M_k$ is computed, after which the coefficients $a_{1,k}, \ldots a_{m_2,k}$
are retrieved by back-substitution. The complexity of this operation is $\mathcal{O}\left(m_2^2 (n+1)\right)$. \newline
\begin{tabular}{c}
\begin{algorithm*}[H]
 $Q := [\mb{q}_1, \ldots, \mb{q}_{m_2}], R := (r_{i,j}) \in \mathbb{C}^{m_2 \times m_2}$,\\ such that $QR = M_k$\;
  \For{$j=m_2, m_2-1, \ldots, 1$}{
  $a_{j,k} := \left(\mb{q}_j^\ast \mb{v}' - \sum_{\ell=j+1}^{m_2} r_{j,\ell} a_{j,k}\right)/r_{j,j}$\;
  $\mb{v}' = \mb{v}' - a_{j,k} \mb{w}_{k-m-1}(z_j)$\;
  }
  $h_{k+1,k} = ||\mb{v}'||$; $\mb{v}_{k+1} := \mb{v}'/h_{k+1,k}$\;
\end{algorithm*}
\end{tabular}
Then for each $z_i$, $1 \leq i \leq m_2$, recurrence relation \eqref{eq:computenextw} is used to update the GMRES residual vectors, which are all equal to
the starting vector $\mb{v}_1$ at the beginning of the iteration ($k=m+1$). Each time the relation \eqref{eq:computenextw} is employed, a matrix vector product needs to be computed.
This leads to a total complexity of $\mathcal{O}(m_2 n^2)$. \newline
\begin{tabular}{c}
\begin{algorithm*}[H]
  \For{$j=1, \ldots, m_2$}{
  $\tau_{k-m}(z_j) = (-1)^{k-m-1}\mb{w}_{k-m-1}(z_j)^\ast (A-z_j I_n) \mb{v}_{k-m}$\;
  $s_{k-m}(z_j) = h_{k-m+1,k-m}/\sqrt{h_{k-m+1,k-m}^2 + |\tau_{k-m}(z_j)|^2}$\;
  $c_{k-m}(z_j) = \tau_{k-m}(z_j)/\sqrt{h_{k-m+1,k-m}^2 + |\tau_{k-m}(z_j)|^2}$\;
  $\mb{w}_{k-m}(z_j) = s_{k-m}(z_j)\mb{w}_{k-m-1}(z_j) + (-1)^{k-m} \overline{c_{k-m}(z_j)} \mb{v}_{k-m+1}$\;
  }
\end{algorithm*}
\end{tabular}
Note that it is numerically more stable if we do not divide by the square root \newline $\sqrt{h_{k-m+1,k-m}^2 + |\tau_{k-m}(z_j)|^2}$ in the computation of $s_{k-m}(z_j)$ and $c_{k-m}(z_j)$, but
instead normalize $\mb{w}_{k-m}(z_j)$ after each iteration (this however, leads to $m_2$ additional scalar products).
If we assume the matrix under consideration is sparse; allowing a computational complexity of $\mathcal{O}(n)$ to compute a matrix vector product; the total complexity to compute the
first $k$ orthonormal Arnoldi vectors can be estimated as $\mathcal{O}(kn)$.

\begin{remark}
 If the rational function in \eqref{eq:introeq} has only one pole, i.e., $m_2 = 1$ in \eqref{eq:8} then the order in which the coefficients are determined can be reversed.
 More precisely, we can first compute $a_{1,k}$ as $\mb w_{k-m-1}(z_1)^\ast \mb v_k'$ and then orthonormalize the resulting difference $\mb v_k' - a_{1,k} \mb w_{k-m-1}(z_1)$
 against $\mb v_{k-m+1}, \ldots, \mb v_k$ to obtain $\mb v_{k+1}$. This might be of influence on the numerical performance.
\end{remark}

\begin{algorithm}[H]
 \KwData{$A \in \mathbb{C}^{n\times n}$ with $A^\ast = \sum_{j=1}^{m_2} d_j(A-z_j I_n)^{-1} + \pi(A)$, $\pi$ of degree $m_1-m_2$, $F=[\mb{f}_1, \ldots, \mb{f}_{m_3}]$,
 $G = [\mb{g}_1, \ldots, \mb{g}_{m_3}] \in \mathbb{C}^{n \times m_3}$, $\mb{b} \in \mathbb{C}^{n}$, $i$}
 \KwResult{$V_{i+1} = [\mb{v}_1, \mb{v}_2, \ldots, \mb{v}_{i+1}] \in \mathbb{C}^{n \times (i+1)}$}
  $m := \max (0, m_1-m_2+1) $\;

 $\mb{v}_1 = \mb{b}/||\mb{b}||$\;
 \For{$j=1, \ldots, m_2$}{
 $\mb{w}_0(z_j) = \mb{v}_1$\;
 }
 \For{$k=1, \ldots, m$}{
 $\mb{v}' = A\mb{v}_k$\;
 \For{$j=1, \ldots, i$}{
 $h_{j,k} = \mb{v}_j^\ast \mb{v}'$; $\mb{v}' = \mb{v}' - h_{j,k}\mb{v}_j$\;
 }
 $h_{k+1,k} = ||\mb{v}'||$; $\mb{v}_{k+1} = \mb{v}'/h_{k+1,k}$\;
 }
 $\widetilde G := 0 \in \mathbb{C}^{n \times m_3}$\;
  \For{$k=m+1, \ldots, i$}{
  $\hat F_{k,:} := \mb{v}_k^\ast F$; $\hat G_{k-m,:} := \mb{v}_{k-m}^\ast G$\;
  $\widetilde G := \widetilde G + \mb{v}_{k-m} \hat G_{k-m,:}$\;
  $\mb{v}' := A \mb{v}_k - \widetilde G \hat F^\ast_{k,:}$\;
  \For{$j= k, k-1, \ldots, k-m+1$}{
  $h_{j,k} := \mb{v}_j^\ast \mb{v}'$; $\mb{v}' := \mb{v}' - h_{j,k} \mb{v}_j$\;
  }
  $Q := [\mb{q}_1, \ldots, \mb{q}_{m_2}], R := (r_{i,j}) \in \mathbb{C}^{m_2 \times m_2}$, such that $QR = [\mb{w}_{k-m-1}(z_1), \ldots, \mb{w}_{k-m-1}(z_{m_2})]$\;
  \For{$j=m_2, m_2-1, \ldots, 1$}{
  $a_{j,k} := \left(\mb{q}_j^\ast \mb{v}' - \sum_{\ell=j+1}^{m_2} r_{j,\ell} a_{j,k}\right)/r_{j,j}$\;
  $\mb{v}' = \mb{v}' - a_{j,k} \mb{w}_{k-m-1}(z_j)$\;
  }
  $h_{k+1,k} = ||\mb{v}'||$; $\mb{v}_{k+1} := \mb{v}'/h_{k+1,k}$\;
  \For{$j=1, \ldots, m_2$}{
  $\tau_{k-m}(z_j) = (-1)^{k-m-1}\mb{w}_{k-m-1}(z_j)^\ast (A-z_j I_n) \mb{v}_{k-m}$\;
  $s_{k-m}(z_j) = h_{k-m+1,k-m}/\sqrt{h_{k-m+1,k-m}^2 + |\tau_{k-m}(z_j)|^2}$\;
  $c_{k-m}(z_j) = \tau_{k-m}(z_j)/\sqrt{h_{k-m+1,k-m}^2 + |\tau_{k-m}(z_j)|^2}$\;
  $\mb{w}_{k-m}(z_j) = s_{k-m}(z_j)\mb{w}_{k-m-1}(z_j) + (-1)^{k-m} \overline{c_{k-m}(z_j)} \mb{v}_{k-m+1}$ \;
  }}
 \caption{The fast Arnoldi method}
 \label{alg:bml}
\end{algorithm}

\section{Connection with the Barth-Manteuffel multiple recurrence relation} \label{sec:barthmanteuffel}

The aim of this section is to show how our work is related to that of Barth \& Manteuffel in their article on `Multiple recursion conjugate gradient algorithms'
\cite{barthmanteuffel}.
They introduce an economical conjugate gradient algorithm for the class of $BML$-matrices, by making use of short recurrence relations.
We will give a short summary of their findings and discuss both the differences and similarities with our approach.

To prevent the possibility of a breakdown in their so-called `single recurrence relation' Barth \& Manteuffel rewrote it as a set of recurrence relations, that are
stated in \eqref{eq:barth1}-\eqref{eq:barth3}.

\begin{align}
 \underline{\mb p}_{j+1} &= A \underline{\mb p}_j - \sum_{i=j-(\ell-m)}^j t_{i,j} \underline{ \mb p}_i
 - [\underline{\mb q}_{j_0}, \ldots, \underline{\mb q}_{j_{m-1}}]\underline{\eta}_j \nonumber
 \\ &\phantom{=}- [\underline{\hat{\mb q}}_{j_0}, \ldots, \underline{\hat{\mb q}}_{j_{\kappa-1}}] \underline{\mu}_j, \label{eq:barth1}\\
 \underline{\mb q}_{{j+1}_i} &= \frac{(\overline{\underline{\rho}}_{j+1})_i}{\underline{\mb p}_{j+1}^\ast \underline{\mb p}_{j+1}} \underline{\mb p}_{j+1} + \underline{\mb q}_{j_i} \qquad
 \text{for} \qquad i=0, \ldots, m-1, \label{eq:barth2}\\
 \underline{\hat{\mb q}}_{{j+1}_i} &= \frac{(\overline{\underline{\tau}}_{j+1})_i}{\underline{\mb p}_{j+1}^\ast \underline{\mb p}_{j+1}} \underline{\mb p}_{j+1} + \underline{\hat{\mb q}}_{j_i} \qquad
 \text{for} \qquad i=0, \ldots, \kappa-1. \label{eq:barth3}
\end{align}
Unfortunately, they use different letters, shift indices, and construct an orthogonal, but not orthonormal basis of the Krylov space.
The new normalization comes from the fact that they consider a Hessenberg matrix which has ones on its first subdiagonal.
Their basis vectors $\mb{\underline p}_0, \mb{\underline p}_1, \ldots, \mb{\underline p}_{N-1} \in \mathbb C^n$
satisfy $\mb{\underline p}_j/\|\mb{ \underline p}_j\|=\mb v_{j+1}$. Moreover, they use the integers $(\ell,m,\kappa,\theta)$
instead of $(m_1,m_2,m_3,m_3-1+m)$. Also, as seen in \eqref{eq:barth1}-\eqref{eq:barth3}, two other families of vectors with double indices are used. For simplicity and
consistency we will abbreviate them as \[W_{k}:=[\mb{\underline q}_{k_0},...,\mb{\underline q}_{k_{m_2-1}}]\in \mathbb C^{n \times m_2} \qquad \text{and}  \qquad \widehat W_{k} :=
[\mb{\underline{\widehat q}}_{k_0},...,\mb{\underline{\widehat q}}_{k_{m_3-1}}]\in \mathbb C^{n \times m_3}.\]

As the reader will see below, to compare our approach with \cite{barthmanteuffel}, we will not explicitly make use of \eqref{eq:barth1}-\eqref{eq:barth3}, but instead make use of a mathematical equivalent
of \eqref{eq:barth1}-\eqref{eq:barth3} which is adapted to our notation and scalings. The original pseudocode used by Barth \& Manteuffel is stated in Algorithm \ref{alg:pseudobarth}.

In \cite[Eqn.\ (4.16)]{barthmanteuffel} the authors provide an explicit formula for the entries of the upper Hessenberg matrix $H$:
\begin{equation} \label{eq:BM_generator}
    H_{j,k} = \mb v_j^* A \mb v_k = \underline \rho_j^* \underline \eta_k
    + \underline \tau_j^* \underline \mu_k, \,\, \underline \rho_j, \underline \eta_k \in \mathbb C^{m_2} , \,\, \underline \mu_j , \underline \tau_k \in \mathbb C^{m_3},
\end{equation}
for all $j=1,2,...,k-m$,
in which the reader recognizes generators $\underline \rho_i, \underline \eta_i,  \underline \mu_i$ and  $\underline \tau_i$ for the low-rank part of $H$. Note that, in contrast to
\cite{barthmanteuffel}, we start numbering with $i=1$ instead of $i=0$. To be able to use the recurrence relations \eqref{eq:barth1}-\eqref{eq:barth3} in practice, the
generators $\underline \rho_i, \underline \eta_i,  \underline \mu_i$ and  $\underline \tau_i$ need to be known in advance. Therefore, Algorithm \ref{alg:pseudobarth}
is based on a rewritten form of the recurrence relations \eqref{eq:barth1}-\eqref{eq:barth3} which enables to compute the orthogonal basis vectors
$\mb p_i$ and the generators $\underline \rho_i, \underline \eta_i, \underline \mu_i$ and $\underline \tau_i$ simultaneously.

We define $W_k$ and $\widehat W_k$ \cite[Eqn.\ (4.22) and Eqn.\ (4.23)]{barthmanteuffel} as
\begin{equation} \label{eq:BM_qvectors}
    W_k = \sum_{j=1}^{k+1} \mb v_j \underline \rho_j^* \in \mathbb C^{n \times m_2}, \quad
    \widehat W_k = \sum_{j=1}^{k+1} \mb v_j \underline \tau_j^* \in \mathbb C^{n \times m_3},
\end{equation}
for $k = 0, \ldots, n-1$, which is the equivalent of \eqref{eq:barth2}-\eqref{eq:barth3}.
With a similar reasoning as in \cite[Eqn.\ (5.3)]{barthmanteuffel} it can be proven that
\begin{equation} \label{eq:BM_recurrence}
    A \mb v_k = \sum_{j=1}^{k+1} \mb v_j H_{j,k} = \sum_{j=k-m+1}^{k+1} \mb v_j H_{j,k}
    + W_{k-m-1} \underline \eta_k + \widehat W_{k-m-1} \underline \mu_k,
\end{equation}
which is mathematically equivalent to \eqref{eq:barth1}.

Suppose now that the generators as defined in \eqref{eq:BM_generator} are known. Then one can use
\eqref{eq:BM_recurrence} to compute $\mb v_{k+1}$ out of $\mb v_{k-m+1},...,\mb v_k, W_{k-m-1}$ and $\widehat W_{k-m-1}$, then use
\eqref{eq:BM_qvectors} to compute $W_{k-m},\widehat W_{k-m}$ out of $\mb v_{k-m+1}, W_{k-m-1}$ and $\widehat W_{k-m-1}$, and so on. Hence, it remains
to find the generators. Two of them can be computed with an explicit formula \cite[Eqn.\ (4.15) and (4.13)]{barthmanteuffel}, namely
\begin{equation} \label{eq:BM_mu_eta}
    \underline \mu_k = F^* \mb v_k , \quad
    \underline \eta_k = ( H_{j,k} )_{j=1,...,m_2}.
\end{equation}
The vector $\underline \rho_j$ is obtained \cite[Eqn.\ (4.12)]{barthmanteuffel} as the `remainder' in the polynomial division of $q_{j-1}$ \eqref{eq:arnoldi} by
the denominator $q$ \eqref{eq:introeq}:
\begin{equation} \label{eq:BM_rho}
        q_{j-1}(z) = \alpha_j(z) q(z) + (q_0(z),...,q_{m_2-1}(z) ) \underline \rho_j,
\end{equation}
\noindent where we observe that $\rho_1,...,\rho_{m_2}$ form the canonical basis of $\mathbb C^{m_2}$, and thus $V_{m_2}^* W_{k} = I$.
We may rewrite the remainder in terms of the Lagrange polynomials $\ell_h$ of the roots $z_1,..,z_{m_2}$ of $q$, leading to $$
     (q_0(z),...,q_{m_2-1}(z) ) \underline \rho_j =
     \sum_{h=1}^{m_2} q_{j-1}(z_h) \ell_h(z) \quad
     \mbox{or} \quad
     \underline \rho_j = \sum_{h=1}^{m_2} q_{j-1}(z_h) V_{m_2}^* \ell_h(A) \mb v_1 .
$$
Substituting the expression for $\underline \rho_j$ into \eqref{eq:BM_qvectors} allows to conclude that
\begin{equation}
 W_k = M_k \left[\begin{array}{c}
                                                      \sigma_k(z_1) \mb v_1^\ast \ell_1(A)^\ast V_{m_2} \\
                                                      \vdots \\
                                                      \sigma_k(z_{m_2}) \mb v_1^\ast \ell_{m_2}(A)^\ast V_{m_2}
                                                     \end{array}
 \right].
\end{equation}
Recalling that $V_{m_2}^* W_{k} = I$ gives
\begin{equation} \label{eq:link_w}
        W_k = M_k
        \Bigl( V_{m_2}^* M_k \Bigr)^{-1} ,
\end{equation}
which makes a partial link between \eqref{eq:BM_recurrence} and Proposition~\ref{prop:short_rec}. In particular, if the matrix $A$ is unitary and no
low-rank perturbation is involved, $W_k$ is a multiple of $\mb w_k(0)$ and the Barth-Manteuffel multiple recurrence relation turns out to be equivalent to the Szeg\H{o} recurrence
relations.
The quantities $\underline \rho_{k+1}$ are
computed recursively \cite[Eqn.\ (5.11)]{barthmanteuffel} by computing all entries of $\underline H_k$, and by taking remainders after division by $q$ in the relation $zq_{k-1}(z) =
\sum_{j=1}^{k+1} H_{j,k} q_{j-1}(z)$, the polynomial translation of the Arnoldi relation $AV_k = V_{k+1} \underline H_k$. We refer to lines $6, 8, 12, 25, 27$ and $36$ of
Algorithm \ref{alg:pseudobarth}.

It remains to show how to compute $\underline \tau_{k-m+1}$ (after having computed $\mb v_{k+1},\rho_{k-m+1},W_{k-m}$) and relate the term
$\widehat W_{k-m-1} \underline \mu_k$ in \eqref{eq:BM_recurrence} to the term
$V_{k-m} G_{k-m} F^* \mb v_k = V_{k-m} V_{k-m}^* G \underline \mu_k$ of our short recurrence of Proposition~\ref{prop:short_rec}.
In fact, at this place the authors of \cite{barthmanteuffel} require an additional delay in the recurrence \eqref{eq:BM_recurrence} by replacing $m$ by $m':=m+m_3\geq m$, which
is possible according to \eqref{eq:BM_generator}.
According to \cite[Eqn.\ (5.2)]{barthmanteuffel} and \eqref{eq:BM_generator}, the computation of $\underline \tau_{k-m'+1}$ is done by solving the
system
\begin{equation} \label{eq:BM_tau}
        \underline \tau_{k-m'+1}^* [ \underline \mu_{k-\ell} ]_{\ell=0,...,m_{3}-1} =
        [ H_{k-m'+1,k-\ell} - \underline \rho_{k-m'+1}^* \underline \eta_{k-\ell} ]_{\ell=0,...,m_{3}-1}.
\end{equation}
We refer to line $16$ of Algorithm \ref{alg:pseudobarth}.
However, there is a possible problem with this system which is not discussed in \cite{barthmanteuffel}. As noticed after \cite[Eqn.\ (5.2)]{barthmanteuffel}, it is consistent, but one may
not insure that the matrix is invertible, i.e., we might have several solutions, each of them being a generator suitable for $H_k$, but not necessarily for $H_N$.
This is a general problem with computing generators for $H_N$ in a recursive manner: there is no guarantee that $m_3=\mbox{rank}(F^*)$ is
equal to $\mbox{rank}(F^*V_N)$, and thus whether the minimal number of generators for $H_N$ is equal to $m_2+m_3$. In addition, at stage $k$ of a
recursive computation, it might happen that the minimal number of generators for $H_k$ is strictly lower, i.e., $\mbox{rank}(F^*V_k)<\mbox{rank}(F^*V_N)$,
i.e., we should have a $m_3$ depending on $k$. Finally, the matrix of coefficients is just obtained by picking the last $m_3$ columns of $F^*V_k$ which might also lower the rank.
However, going through the proof of \eqref{eq:BM_generator} we can derive an explicit formula for $\underline \tau_j$. From \cite[Eqn.\ (4.14)]{barthmanteuffel} it can be deduced that
\begin{equation}
 \underline \tau_j = G^\ast q(A) \alpha_j(A) \mb v_1. \label{eq:BM:tau}
\end{equation}
Combining \eqref{eq:BM:tau} and \eqref{eq:BM_rho}, we obtain
\begin{eqnarray}
 \underline \tau_j &=& G^*\left(\mb v_j - (\mb v_1, \ldots, \mb v_{m_2-1}) \underline \rho_j \right) \nonumber \\
                   &=& G^* \mb v_j - G^* V_{m_2} \underline \rho_j. \label{eq:BM_new1}
\end{eqnarray}
Note that \eqref{eq:BM_new1} could be used to compute $\underline \tau_{k-m+1}$ without introducing the
additional delay in \eqref{eq:BM_recurrence}.
Inserting \eqref{eq:BM_new1} into \eqref{eq:BM_qvectors} gives the explicit formula
\begin{align*} \label{eq:BM_new2}
   \widehat W_{k-m-1} &= \sum_{j=1}^{k-m} \mb v_j \left( \mb v_j^\ast G - \underline \rho_j^\ast V_{m_2}^\ast G \right) \\
                      &= V_{k-m} V_{k-m}^* G - W_{k-m-1} V_{m_2}^* G .
\end{align*}

In the following remark we intend to compare our approach with that of Barth \& Manteuffel  \cite{barthmanteuffel}.

\begin{remark}\begin{enumerate}

\item[(a)] Both approaches heavily use the fact that a certain upper right part of the Hessenberg matrix is of rank at most $m_2+m_3$. In other words, one is able to express
$A \mb v_k$ as a linear combination of $m_2+m_3$ correction vectors plus a linear combination of the last $m$ or $m'=m+m_3$ columns of $V_{k+1}$, a kind of corrected ``short'' recurrence.
Notice however that our recurrence is ``shorter'' if $m_3>0$.
\item[(b)] In our approach, $m_3$ correction vectors are explicitly given (the term $\mb v_k'-A \mb v_k$) and can be updated explicitly. They are not necessarily linearly independent.
The other $m_2$ correction vectors are identified as GMRES residuals for shifted systems, allowing for easy updating.

    In contrast, Barth \& Manteuffel compute explicitly the four sequences of generators of the low-rank structure of $H_N$, given in \eqref{eq:BM_generator}.
    Notice, however, that at the $k$th step of the algorithm one can only deduce generators for $H_k$ and not for $H_N$. As mentioned above, in order not to be
    obliged to correct generators found earlier, there should be an additional assumption on $\mbox{rank}(F^*V_k)$ not mentioned by the authors.
    However, there is a variant of the Barth \& Manteuffel approach: instead of using
    \eqref{eq:BM_tau} requiring a delay in the recurrence relation \eqref{eq:BM_recurrence}, one can use \eqref{eq:BM_new1}, which is not mentioned in \cite{barthmanteuffel}, to compute
    $\underline \tau_{k-m+1}$ just after having computed $\underline \rho_{k-m+1}$.
\item[(c)] In our approach, for finding the coefficients of the GMRES correction vectors, we suggest to solve a least square problem, the matrix of coefficients $M_k$ having as
columns these $m_2$ (normalized) GMRES correction vectors. Notice that $M_k$ has full column rank (since $V_{m_{2}}^* M_k$ has), but might be ill-conditioned. Thus standard techniques
(SVD dropping small singular values, or $QR$
decomposition with column pivoting and threshold) can be applied, where the residual error in solving this least-square problem leads to a loss of orthogonality for $v_{k+1}$ of the same order.

    In contrast, Barth \& Manteuffel suggest one of the missing generators by solving system
    \eqref{eq:BM_tau}.
    The computation of the other generator $\underline \rho_{j+1}$ is quite involved and requires the knowledge of the whole $j$th column of
    the Hessenberg matrix $\underline H_k$ (which is not necessarily computed using our approach).

\item[(d)] If GMRES is converging fast, we believe that the normalization \eqref{eq:link_w} is not appropriate since
    $$
       \|  V_{m_2}^* M_k \| \ll
       \| M_k \| \approx 1.
    $$
    Also the $\underline \eta_k$ are very small due to the above-mentioned decay property of the entries of our Hessenberg matrix.
\end{enumerate}
\end{remark}

\begin{algorithm}[ht]
\scriptsize
 \KwData{$A \in \mathbb{C}^{n\times n}$, $\underline{ \mb p}_0$ with $A^\ast q_m(A) - p_\ell(A) = Q_B(A)$, $p_\ell$ of degree $\ell$, $p_m$ of degree $m$ and $Q_B(A)$ of rank $\kappa$,
 $\{\underline \phi_0, \ldots, \underline \phi_{\kappa-1}\}$ basis of the column space of $Q_B(A)$,
 $\theta = \left\{ \begin{array}{cc} \kappa - 1, & \text{if}\,\,\, m > \ell \\
                                     \kappa - \ell + m, & \text{if}\,\,\, m \leq \ell
                   \end{array}\right.$, $\phi = \left\{ \begin{array}{cc} j-\theta+1, & \text{if}\,\,\, m > \ell \\
                                                                          \text{min}\{j-(\ell-m),j-\theta+1 \}, & \text{if}\,\,\, m \leq \ell
                                                        \end{array}\right.$
$\underline{ c \mb p}_0 = [\begin{array}{cccc} 1 & 0 & \ldots & 0 \end{array} ]_{m+1}^T$, $\underline \rho_0 = [\begin{array}{cccc}1 & 0 & \ldots & 0 \end{array}]_m^T$ }
 \KwResult{Orthogonal Krylov basis $\underline{\mb p}_0, \underline{\mb p}_1, \ldots$}

 \For{$j=0, \ldots, \theta-1$}{
 $\mb{\underline p}_{j+1} = A \mb{\underline p}_j - \sum_{i=0}^j \sigma_{i,j} \mb{\underline p}_j, \,\,\,
 \sigma_{i,j} = \frac{\mb{\underline p}_i^\ast A \mb{\underline p}_j }{\mb{\underline p}_i^\ast \mb{\underline p}_i}$ \;
     \eIf{$j<m$}{
   $\underline{c\mb p}_{j+1} = \underline{cA \mb p}_j - \sum_{i=0}^j \sigma_{i,j} \underline{c p}_j, \,\, \underline{c A \mb p}_j =
   [\begin{array}{cc}0 & \underline{c \mb p}_j \end{array} ]_{m+1}^T$\;
    \eIf{$j<m-1$}{$\underline \rho_{j+1} = [\begin{array}{ccccccc}0& \ldots & 0 & 1 & 0 & \ldots & 0\end{array}]_m^T$\;}{$\text{solve}\,\,\,
    \left[\begin{array}{cccc} | & | & & | \\
     \underline{c \mb p}_0 & \underline{c \mb p}_1 & \ldots & \underline{c \mb p}_m \\
     | & | & & | \end{array} \right][\begin{array}{ccc}\gamma_0 & \ldots & \gamma_m \end{array}]^T = [\begin{array}{ccc}b_0 & \ldots & b_m \end{array}]^T \,\,\, \text{for}
     \,\,\, \underline \gamma, \,\,\,
    \underline \rho_{j+1} = \left[\begin{array}{ccc}-\frac{\gamma_0}{\gamma_m} & \ldots & - \frac{\gamma_{m-1}}{\gamma_m} \end{array} \right]$\;}
   }{$\underline c_j = H_{m+1,m} \underline \rho_j, \,\,\, \underline{rA \mb p}_j = c_0 \underline \rho_0 + \ldots + c_m \underline \rho_m$\;
   $\underline \rho_{j+1} = \underline{rA \mb p}_j - \sum_{i=0}^j \sigma_{i,j} \underline \rho_i$\;}
 }
 \For{$j = \theta, \ldots,$}{
 $\text{solve}\,\,\, [\begin{array}{ccc}\underline \mu_j & \ldots & \underline \mu_{j-\kappa+1} \end{array}]^T  \underline{\bar \tau}_{j-\theta} =
 \left[ \begin{array}{c}  \underline{\mb p}_{j-\theta}^\ast  A \underline{\mb p}_j- \underline \rho_{j-\theta}^\ast \underline \eta_j  \\
                         \vdots \\
                         \underline{\mb p}_{j-\theta}^\ast A \underline{\mb p}_{j-\kappa+1} - \underline \rho_{j-\theta}^\ast \underline \eta_{j-\kappa+1} \end{array} \right]
 \,\,\,\text{for} \,\,\, \underline{\bar \tau}_{j-\theta}$\;
 $\underline{ \mb q}_{j-\theta_i} = \frac{(\underline{\bar \rho}_{j-\theta})_i}{\underline{\mb p}_{j-\theta}^\ast \underline{\mb p}_{j-\theta} }
 \underline{ \mb p}_{j-\theta}+ \underline{\mb q}_{{j-\theta-1}_i}, \,\,\, i = 0, \ldots, m-1$\;
 $\underline{\hat{ \mb q}}_{j-\theta_i} = \frac{(\underline{\bar \tau}_{j-\theta})_i}{\underline{\mb p}_{j-\theta}^\ast \underline{\mb p}_{j-\theta} }
 \underline{ \mb p}_{j-\theta}+ \underline{\hat{\mb q}}_{{j-\theta-1}_i}, \,\,\, i = 0, \ldots, \kappa-1$\;
 $\underline{r \mb q}_{j-\theta_i} = \frac{(\underline{\bar \rho}_{j-\theta})_i}{\underline{\mb p}_{j-\theta}^\ast \underline{\mb p}_{j-\theta} }
 \underline{ \rho}_{j-\theta}+ \underline{r \mb q}_{{j-\theta-1}_i}, \,\,\, i = 0, \ldots, m-1$\;
 $\underline{r\hat{ \mb q}}_{j-\theta_i} = \frac{(\underline{\bar \tau}_{j-\theta})_i}{\underline{\mb p}_{j-\theta}^\ast  \underline{\mb p}_{j-\theta}}
 \underline{ \rho}_{j-\theta}+ \underline{r \hat{ \mb q}}_{{j-\theta-1}_i}, \,\,\, i = 0, \ldots, \kappa-1$\;
 \eIf{$j<m$}
 {$\underline{ \mb p}_{j+1} = A \underline{ \mb p}_j - \sum_{i=0}^j \sigma_{i,j} \underline{ \mb p}_j, \,\,\,
 \sigma_{i,j} = \frac{\mb{\underline p_i}^\ast A \mb{\underline p_j}}{\mb{\underline p_i}^\ast \mb{\underline p_i}}$\;
 $\underline{c\mb p}_{j+1} = \underline{cA \mb p}_j - \sum_{i=0}^j \sigma_{i,j} \underline{c p}_j, \,\, \underline{c A \mb p}_j =
   [\begin{array}{cc}0 & \underline{c \mb p}_j \end{array} ]_{m+1}^T$\;
  \eIf{$j<m-1$}{$\underline \rho_{j+1} = [\begin{array}{ccccccc}0& \ldots & 0 & 1 & 0 & \ldots & 0\end{array}]_m^T$\;}{$\text{solve}\,\,\,
    \left[\begin{array}{cccc} | & | & & | \\
     \underline{c \mb p}_0 & \underline{c \mb p}_1 & \ldots & \underline{c \mb p}_m \\
     | & | & & | \end{array} \right][\begin{array}{ccc}\gamma_0 & \ldots & \gamma_m \end{array}]^T = [\begin{array}{ccc}b_0 & \ldots & b_m \end{array}]^T \,\,\, \text{for}
     \,\,\, \underline \gamma,\,\,\,
    \underline \rho_{j+1} = \left[\begin{array}{ccc}-\frac{\gamma_0}{\gamma_m} & \ldots & - \frac{\gamma_{m-1}}{\gamma_m} \end{array} \right]$\;}
 }
 {$\underline \eta_j = [\begin{array}{ccc} \underline{\mb p}_0^\ast A \underline{\mb p}_j  & \ldots &
 \underline{\mb p}_{m-1}^\ast A \underline{\mb p}_j  \end{array} ]_m^T$\;
 $\underline \mu_j = [\begin{array}{ccc} \underline \phi_0^\ast \underline{\mb p}_j & \ldots &
 \underline \phi_{\kappa-1}^\ast \underline{\mb p}_j  \end{array} ]_\kappa^T$\;
 $\underline{ \mb y}_{j+1} = A \underline{ \mb p}_j - [\begin{array}{ccc} \underline{\mb q}_{j-\theta_0} & \ldots & \underline{\mb q}_{j-\theta_{m-1}} \end{array} ]
 \underline \eta_j - [\begin{array}{ccc} \underline{\mb {\hat q}}_{j-\theta_0} & \ldots & \underline{\mb {\hat q}}_{j-\theta_{m-1}} \end{array} ]
 \underline \mu_j$\;
 $\underline{ \mb p}_{j+1} = \underline{ \mb y}_{j+1} - \sum_{i=\phi}^j \hat t_{i,j} \underline{ \mb p}_i, \,\,\,
 \hat t_{i,j} = \frac{\underline{ \mb p}_i^\ast \underline{\mb y}_{j+1} }{\underline{\mb p}_i^\ast \underline{\mb p}_i }$\;
 $\underline c_j = H_{m+1,m}\underline \rho_j, \,\,\, \underline{r A \mb p}_j = c_0 \underline \rho_0 + \ldots + c_m \underline \rho_m$\;
 $\underline{r \mb y}_{j+1} = \underline{rA \mb p}_j - \sum_{i=0}^{m-1} (\underline \eta_j)_i \underline{r \mb q}_{{j-\theta}_i} - \sum_{i=0}^{\kappa-1} (\underline \mu_j)_i
 \underline{r \hat {\mb q}}_{{j-\theta}_i}$\;
 $\underline \rho_{j+1} = \underline{r \mb y}_{j+1} - \sum_{i=\phi}^j \hat t_{i,j} \underline \rho_i$\;}

 }

 \caption{Barth-Manteuffel algorithm}
 \label{alg:pseudobarth}
\end{algorithm}

\section{Some special matrices} \label{sec:unitary}
In this section we give special attention to some classes of matrices where we can slightly reduce the computational complexity of Algorithm 1. The first class consists of matrices $A$ which
satisfy an equation of the form
\begin{equation}
 A^\ast - \alpha A - \beta I = FG^\ast, \label{eq:nearlyherm1}
\end{equation}
for some $\alpha, \beta \in \mathbb{C}$. This includes the class of normal matrices of which all but $m_3$ eigenvalues are collinear.
We will address this kind of matrices as \textit{nearly Hermitian matrices}. If $\alpha = 1, \beta = 0$ and $A$ is real, this corresponds to the class of
\textit{nearly symmetric matrices} as discussed in \cite{BeckReich}. However, one can easily check that all results derived in \cite{BeckReich} are also valid for a matrix of the
form \eqref{eq:nearlyherm1}.

The second class consists of matrices $A$ which satisfy an equation of the form
\begin{equation} A^\ast - \alpha I - \beta (A-\delta I)^{-1} = FG^\ast, \label{eq:nearlyuni}
\end{equation}
for some $\alpha, \beta, \delta \in \mathbb{C}$.
This includes the class of normal matrices of which all but $m_3$ eigenvalues are concyclic. If $\delta = 0$, we speak of \textit{nearly unitary matrices}, if $\delta \not= 0$ we speak of
\textit{nearly shifted unitary matrices}.

The class of matrices satisfying \eqref{eq:nearlyherm} or \eqref{eq:nearlyuni} include all examples of $BML$-matrices known to us which are of practical interest.
By this we mean,
matrices that are suitably large with respect to the quantities $m_1, m_2$ and $m_3$.
More information on matrices satisfying equation \eqref{eq:introeq} can be found in \cite{Liesen}.

\subsection{Nearly Hermitian matrices}
\label{subsec:nh}

Assume the matrix $A$ is nearly Hermitian. Then \eqref{eq:8} reduces to
\begin{equation}
 \mb v_k' = h_{k+1,k} \mb v_{k+1} + h_{k,k} \mb v_k + h_{k-1,k} \mb v_{k-1}. \label{eq:herm}
\end{equation}
Define the vectors $\mb{p}_k^\ast \in \mathbb{C}^{m_3}$ recursively as
\begin{equation}
 \mb{p}_k^\ast := -s_{k-1}(0)\mb{p}_{k-1}^\ast + c_{k-1}(0)\mb{e}_k^\ast G_k, \label{eq:p0}
\end{equation} for $k \geq 2$ and $\mb{p}_1^\ast = G_1$.
By recurrence on $k$ it follows that $\mb{p}_k^\ast = \mb{e}_k^\ast Q_k(0)^\ast G_k$. Therefore, in combination with \eqref{eq:20},
\begin{equation}
 \mb{p}_k^\ast = (-1)^{k-1}\mb{w}_{k-1}(0)^\ast V_k G_k. \label{eq:taubis}
\end{equation}
Then \eqref{eq:tau} yields
\begin{eqnarray}
 \tau_k(0) &=& (-1)^{k-1} \mb w_{k-1}(0)^\ast A \mb v_k \nonumber \\
           &=& (-1)^{k-1} \mb{w}_{k-1}(0)^\ast (\mb{v}_k' + V_k G_k F_k^\ast \mb{e}_k) \nonumber \\
           &=& (-1)^{k-1} \mb w_{k-1}(0)^\ast \mb v_k' + (-1)^{k-1} \mb w_{k-1}(0)^\ast V_k G_k F_k^\ast \mb e_k \nonumber \\
           &=& h_{k,k} c_{k-1}(0) - h_{k-1,k}c_{k-2}(0) s_{k-1}(0) + \mb p_k^\ast F_k^\ast \mb e_k, \label{eq:tauherm}
\end{eqnarray}
the latter equality because of \eqref{eq:herm} and \eqref{eq:taubis}. Expression \eqref{eq:tauherm} can now be used to compute $\tau_k(0)$ instead
of \eqref{eq:tau}, reducing the computational complexity\footnote{Expression
  \eqref{eq:tauherm} was also proved alternatively By Beckermann and Reichel \cite[Proposition 4.2]{BeckReich}.}.

\subsection{Nearly unitary matrices}
\label{subsec:num}
Assume the matrix $A$ is nearly unitary.
Then \eqref{eq:8} reduces to
\begin{equation}
 \mb{v}_k' = a_{1,k} \mb{w}_{k-1}(0) + h_{k+1,k}\mb{v}_{k+1}, \label{eq:univec}
\end{equation}
where $a_{1,k} = \mb{w}_{k-1}(0)^\ast \mb{v}_k'$ and $h_{k+1,k}$ such that $\mb{v}_{k+1}$ is of unit length.
Again we make use of the vector $\mb p_k^\ast$ as defined in \eqref{eq:p0}.
Then because of \eqref{eq:taubis}, \eqref{eq:tau} yields
\begin{eqnarray}
 \tau_k(0) &=& (-1)^{k-1} \mb{w}_{k-1}(0)^\ast A \mb{v}_k \nonumber \\
           &=& (-1)^{k-1} \mb{w}_{k-1}(0)^\ast (\mb{v}_k' + V_k G_k F_k^\ast \mb{e}_k) \nonumber \\
           &=& (-1)^{k-1} a_{1,k} + \mb{p}_k^\ast F_k^\ast \mb{e}_k. \label{eq:tauuni}
\end{eqnarray}
Expression \eqref{eq:tauuni} can now be used to compute $\tau_k(0)$ instead of \eqref{eq:tau}.

\subsection{Nearly shifted unitary matrices}
\label{subsec:nsum}
Assume the matrix $A$ is nearly shifted unitary. Then \eqref{eq:8} reduces to
\begin{equation}
 \mb v_k' = a_{1,k} \mb w_{k-2}(\delta) + h_{k,k} \mb v_k + h_{k+1,k} \mb v_{k+1}, \label{eq:shiftedrec}
\end{equation}
where $a_{1,k} = \mb w_{k-2}(\delta)^\ast \mb v_k'$ and $h_{k,k}$, $h_{k+1,k}$ are entries of the corresponding Hessenberg matrix.
From \eqref{eq:update} we deduce that
\begin{equation}
 \mb v_{k-1}^\ast   \mb w_{k-2}(\delta)= (-1)^{k-2} \overline{c_{k-2}(\delta)}. \label{eq:nearly1}
\end{equation}
Hence, due to \eqref{eq:update} and \eqref{eq:shiftedrec},
\begin{eqnarray}
 \mb w_{k-2}(\delta)^\ast \mb v_{k-1}' &=& s_{k-2}(\delta) \mb w_{k-3}(\delta)^\ast \mb v_{k-1}' + (-1)^{k-2} c_{k-2}(\delta) \mb v_{k-1}^\ast \mb v_k' \nonumber \\
                                       &=& s_{k-2}(\delta) a_{1,k-1} + (-1)^{k-2} c_{k-2}(\delta) a_{1,k} \mb v_{k-1}^\ast \mb w_{k-2}(\delta) \nonumber \\
                                       &=& s_{k-2}(\delta) a_{1,k-1} + a_{1,k} |c_{k-2}(\delta)|^2. \label{eq:nearly2}
\end{eqnarray}
Finally, we know that
\begin{equation}
 (-1)^{k-1}\mb{w}_{k}(\delta)^\ast V_k G_k = s_k(\delta) \mb p_k^\ast(\delta), \label{eq:nearly3}
\end{equation}
with $\mb p_k^\ast(\delta) \in \mathbb{C}^{m_3}$ recursively defined as
\begin{equation}
 \mb{p}_k^\ast := -s_{k-1}(\delta)\mb{p}_{k-1}^\ast + c_{k-1}(\delta)\mb{e}_k^\ast G_k, \label{eq:p}
\end{equation} for $k \geq 2$ and $\mb{p}_1^\ast = G_1$.
Making use of \eqref{eq:nearly1},  \eqref{eq:nearly2} and \eqref{eq:nearly3}, \eqref{eq:tau} yields
\begin{eqnarray*}
 \tau_{k-1}(\delta) &=& (-1)^{k-2} \mb w_{k-2}(\delta)^\ast (A - \delta I_n) \mb v_{k-1} \\
                    &=& (-1)^{k-2} \mb w_{k-2}(\delta)^\ast \left(\mb v_{k-1}' + V_{k-2}G_{k-2}F_{k-1}^\ast \mb e_{k-1} -\delta  \mb v_{k-1} \right) \\
                    &=& (-1)^{k-2}s_{k-2}(\delta) a_{1,k-1} + (-1)^{k-2} a_{1,k} |c_{k-2}(\delta)|^2 \\ &\phantom{=}& - s_{k-2}(\delta) \mb{p}_{k-2}(\delta)^\ast F_{k-1}^\ast \mb{e}_{k-1} - \delta c_{k-2}(\delta).
\end{eqnarray*}
As before, the above expression can now be used to compute $\tau_{k-1}(\delta)$ instead of \eqref{eq:tau}.

\section{Numerical examples} \label{sec:num}

In this section we will compare our fast Arnoldi algorithm with the one of Barth-Manteuel and classical
Arnoldi. We focus especially on the orthogonality of the obtained Arnoldi vectors. The orthogonality in
the forthcoming figures is measured by a method described originally by Paige [6, 15]. Given
$V_k^* V_k - I = U_k+U_k^*$ with $U_k$ strictly upper triangular, we define $S_k = (I + U_k)^{-1} U_k$. The
norm of $S_k$ is used as an orthogonality measure for the columns of $V_k$, i.e., $\| S_k \|\in [0, 1]$ where
$\| S_k \|=0$ when they are orthonormal and $\| S_k \|=1$ when they are linearly dependent [15]. To
make the fairest possible comparison with the Barth-Manteuffel algorithm, we implemented
their pseudocode as stated in [2] and recalled in Algorithm 2. However, their pseudocode returns an orthogonal basis,
while recurrence relation (3.6) returns an orthonormal basis. Therefore, we have normalized
these vectors first.

We will start in \S 6.1 and \S 6.2 by discussing the special case of nearly unitary $A$ (with $\delta=0$) and nearly shifted unitary $A$ (with shift $\delta\neq 0$), where we replaced in our BML-Arnoldi algorithm formula (2.23) for the computation of $\tau_k(\delta)$ by the less expensive formulas described in \S 5.2, and \S 5.3, respectively. Subsequently we report in \S 6.3 about an example of a nearly Hermitian matrix discussed already in [6].

Quite often there is some correlation between loss of orthogonality between Arnoldi  vectors and convergence of the GMRES residual $r_k(\delta)$ of the shifted system $(A-\delta I)x=b$ with starting vector $x_0=0$. This phenomenon is probably related to the decay properties mentioned in Remark 2.7. We therefore draw in each of the figures below the relative GMRES residual
\begin{equation}\label{xxx}
         \frac{\| r_k(\delta) \|}{\| r_0(\delta) \|} = \frac{1}{\sigma_k(\delta)} = \prod_{j=1}^k s_j(\delta) ,
\end{equation}
the last identity following from (2.8). Notice that the quantities $s_j(\delta)$ are
already computed in the BML-Arnoldi algorithm in the case $m_3=1$ of \S 6.1 and \S 6.2,
whereas in \S 6.3 we have to add the computation of $s_j(\delta)$, here  for $\delta=0$,
following the formulas given in \S 5.1. One may understand \eqref{xxx}  as the recursive
computation of the GMRES residuals following some progressive residual scheme, where the
underlying least squares problem is solved by successive Givens rotations.
We will refer to this residual in the forthcoming figures as the \emph{progressive residual}. However, due to
loss of orthogonality, it might be that these progressive residuals are badly
computed. This is why each time we display also the "exact" relative GMRES residual,
obtained by computing the $k$th iterate of GMRES for the shifted system $(A-\delta I)x=b$
with starting vector $x_0=0$ via the black box routine of Matlab (which does not use our
Arnoldi vectors but recomputes them via full Arnoldi, and solves the least squares problem
via Householder transforms).
It turns out that, in all our numerical experiments, that when both Arnoldi and fast
Arnoldi behave well, that the progressive residual and the GMRES residual
exhibit the same convergence history. 

All computations were carried out in Matlab R2015a. As a starting vector for the Krylov
subspace we always consider a vector $b$ that has normally distributed random entries with
mean zero and variance one.

\subsection{Perturbed diagonal  and unitary matrices}
We consider $200 \times 200$ diagonal matrices for which all but $m_3$ eigenvalues lie on a circle. Clearly, such matrices satisfy equation \eqref{eq:introeq} with $m_1 =
m_2 = 1$.
We
considered various cases; for each case we show
the eigenvalues of the matrix
and a comparison of the orthogonality of the computed Arnoldi vectors for classical
Arnoldi, Barth-Manteuffel, and the fast Arnoldi method.
The legend is plotted in
Figure~\ref{fig:part_circle} and is identical for all similar graphs in this section.


\begin{enumerate}
\item In the first experiment, see Figure~\ref{fig:part_circle}, we consider eigenvalues
  on three quarter of the unit circle.  Clearly the full Arnoldi and fast Arnoldi perform
  best and in all the tests we ran the orthogonality of the computed vectors was
  comparable. Other experiments revealed that Barth-Manteuffel performed just slightly
  worse when considering eigenvalues distributed over the entire unit circle, the
  performance of Barth-Manteuffel started to degrade when segments were excluded from the
  unit circle.  The progressive residual
seems to align almost perfectly with the GMRES residual. We have also tested various radii
and similar conclusions hold when the radius of the circle is changed.

\item In the second experiment, see Figure~\ref{fig:shifted_circle}, we have shifted
  the unit circle in the complex plane. Barth-Manteuffel  seems
  to have problems with this case, Arnoldi, and the fast Arnoldi method
on the other hand
  exhibit good accuracy. The progressive residual and the GMRES residual align again
  almost perfectly.

\item In a third experiment, see Figure~\ref{fig:out_circle}, all eigenvalues except for
  two are located on the unit circle. In this case Barth-Manteuffel outperforms our code
  slightly. We note that the location of the eigenvalues outside the circle does not have
  a significant impact on the overall picture of the accuracy.
 Tests revealed also that if one would shift the midpoint or exclude eigenvalues out of parts of the circle the fast Arnoldi method would outperform Barth-Manteuffel.
\end{enumerate}
Overall we can conclude that the Arnoldi method is the most accurate one and the fast Arnoldi
method is also typically quite close. The Barth-Manteuffel algorithm, however, exhibits
quick loss of orthogonality when the circle is shifted or the eigenvalues do not span the
entire circle. Moreover, the progressive residual and the GMRES residual align almost perfectly.

\begin{figure}[h]
 \centering
 \includegraphics[]{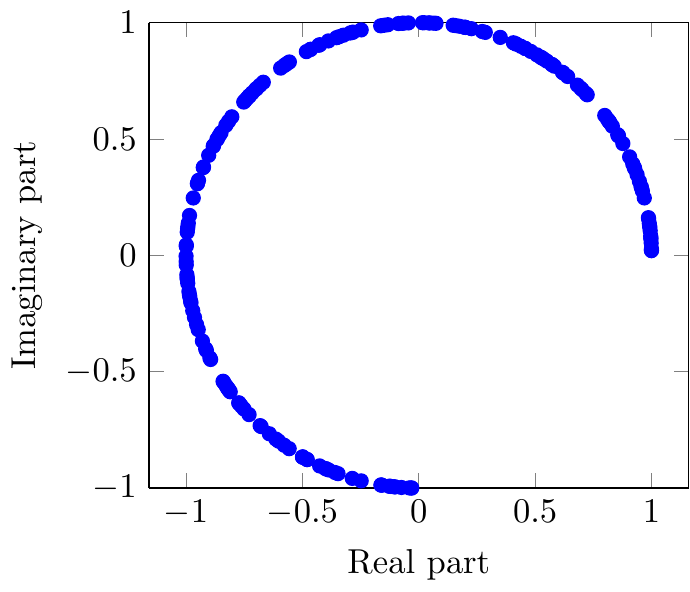}
 \includegraphics[]{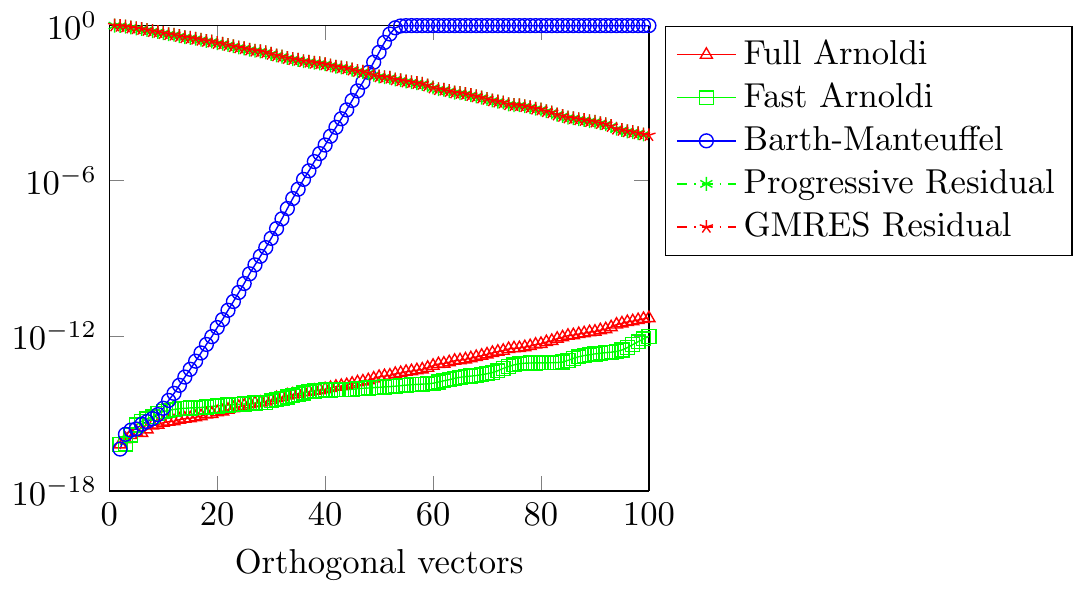}
\caption{The orthogonality, measured by $\|S_k\|$ as in \cite{paige}, of the computed Arnoldi vectors
  for classical Arnoldi, Barth-Manteuffel, and the fast Arnoldi, for
  a matrix having its eigenvalues randomly distributed over three quarter of
  the unit circle. The
 progressive residual as well as the residual of GMRES are also depicted, we observe an
 almost identical behavior for both residuals. }
%
\label{fig:part_circle}
\end{figure}

\begin{figure}[h]
 \centering
 \includegraphics[]{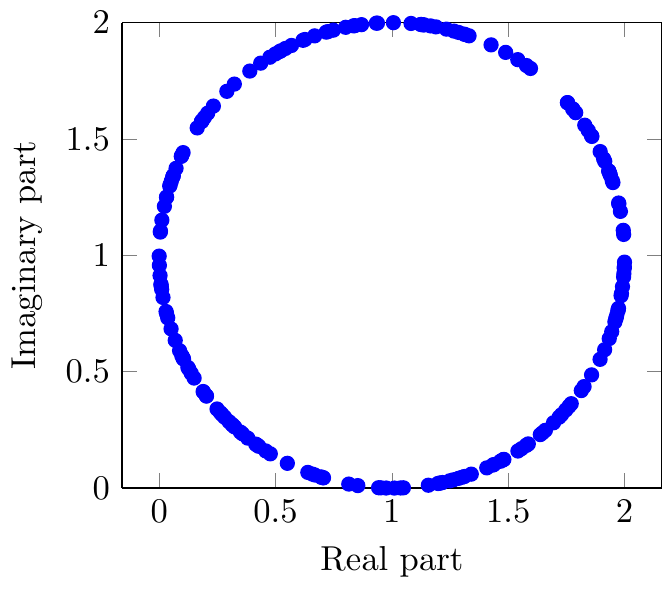}
 \includegraphics[]{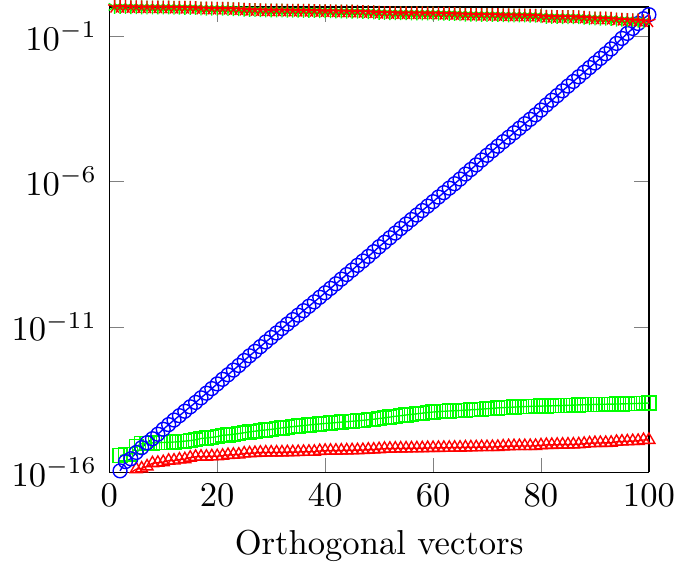}
\caption{The orthogonality, measured as in \cite{paige}, of the computed Arnoldi vectors
  for classical Arnoldi, Barth-Manteuffel, and the fast Arnoldi method, for a
  matrix having its eigenvalues distributed over a shifted unit circle. The
 progressive residual as well as the residual of GMRES are depicted and align nicely. The legend is
 identical to the one used in Figure~\ref{fig:part_circle}.
}
\label{fig:shifted_circle}
\end{figure}

\begin{figure}[h]
 \centering
 \includegraphics[]{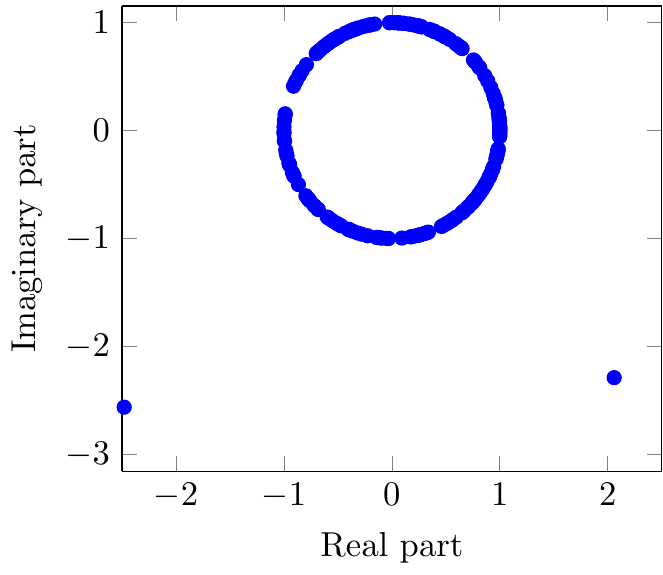}
 \includegraphics[]{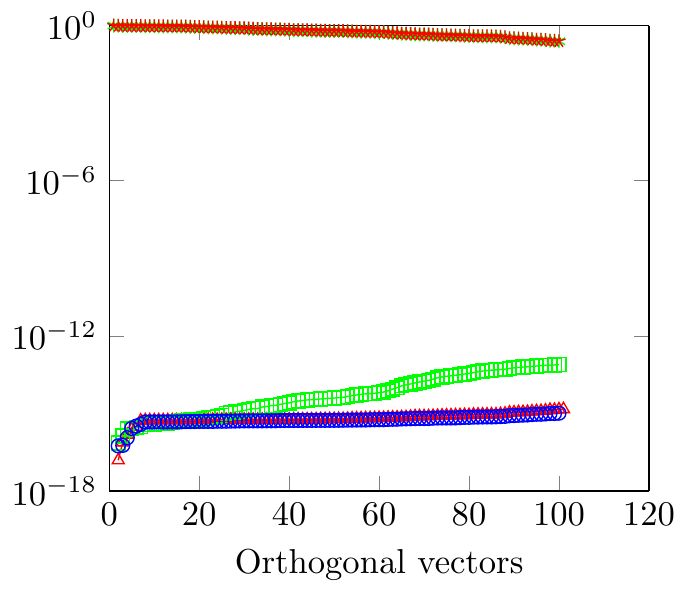}
\caption{The orthogonality, measured as in \cite{paige}, of the computed Arnoldi vectors
  for classical Arnoldi, Barth-Manteuffel, and the fast Arnoldi method, for
  a matrix having all, except two eigenvalues, on the unit circle. The legend is
 identical to the one used in Figure~\ref{fig:part_circle}.
The
 progressive residual as well as the residual of GMRES exhibit the same behavior. }
%
\label{fig:out_circle}
\end{figure}

We also considered a real non-normal matrix $A$ which is of the form
\begin{equation} A = U + \mb{u} \mb{v}^*, \label{eq:uniplus} \end{equation}
for some vectors $\mb u$ and $\mb v$ and a unitary matrix $U$. Using the Sherman-Morrison formula \cite{b037}, equation \eqref{eq:uniplus} yields
\[A^\ast = A^{-1} + \mb v \mb u^* + \frac{U^\ast \mb u \mb v^* U^\ast}{1+\mb v^* U^\ast \mb u}. \]
Hence,
\[ A^\ast = A^{-1} + FG^\ast, \,\,\, \text{with} \,\,\, F := \left(\begin{array}{cc}
                                                                   \mb v & \frac{1}{1+\mb v^* U^\ast \mb u} U^\ast \mb u
                                                                  \end{array} \right), \,\,\, G := \left(\begin{array}{cc}
                                                                  \mb u & U \mb v
                                                                  \end{array} \right).\]
The orthogonality of the computed Arnoldi vectors was examined
for a $100 \times 100$ matrix $A$ of the form \eqref{eq:uniplus} where
the unitary matrix $U$ and the vectors $\mb u$ and $\mb v$ are randomly generated.
In this case the orthogonality behaved similar to Figure~\ref{fig:part_circle}, implying
that the fast Arnoldi behaves similar to Arnoldi and Barth-Manteuffel deteriorates.



\subsection{Unitary matrix from Quantum Chromodynamics}
We consider a shifted unitary matrix which finds its origin in Quantum Chromodynamics (QCD)\cite{qcd}. QCD is the theory which describes the
fundamental interaction between quarks, which are the building blocks of protons and neutrons. This theory makes use of the Neuberger overlap operator $A=\rho I + \gamma\,
\text{sign}(Q)$, where $\rho$ and $\gamma$ are scalars and $Q$ is the Hermitian
Wilson fermian matrix. As a result the Neuberger overlap operator is a shifted unitary matrix.
To construct the matrix $Q$ a parameter $\kappa$
and a hopping matrix are needed. We
have selected these parameters equal to the ones from
\cite{qcd}, i.e. $\kappa = 0.2809$ and as hopping matrix \texttt{conf5.0-00.14x4-2600.mtx}
from the Matrix Market\footnote{A repository of test data for use in
comparative studies of algorithms for numerical linear algebra, featuring nearly 500 sparse matrices from a variety of applications, as well as matrix generation
tools and services.}.

To compute $\text{sign}(Q)$, we invoke the
software package designed by Arnold, et al.,
\cite{qcd}, which makes use of the Zolotarev algorithm \cite{Zolotarev}. We choose
parameters  $\rho=2$ and $\gamma=1$ for the
Neuberger overlap operator. 
On the left of Figure~\ref{fig:qcd}
we have drawn the eigenvalues of the Neuberger overlap operator $A$ and observe that the
density of the  eigenvalues is much higher on the right than on the left.

The orthogonality of the computed Arnoldi vectors is depicted in
Figure~\ref{fig:qcd}. Approximately the first ten iterations of the recurrence relation
\eqref{eq:8} fast Arnoldi and the Barth-Manteuffel algorithm show
comparable accuracy. After that, the orthogonality of the vectors computed with the
Barth-Manteuffel algorithm deteriorates fast at almost the same rate as classical
Arnoldi. Even though the progressive residuals and the GMRES residual align, classical
Arnoldi seems to suffer heavily from loss of accuracy. The fast Arnoldi method clearly
outperforms the other approaches.

\begin{figure}[h!tb]
 \centering
 \includegraphics[]{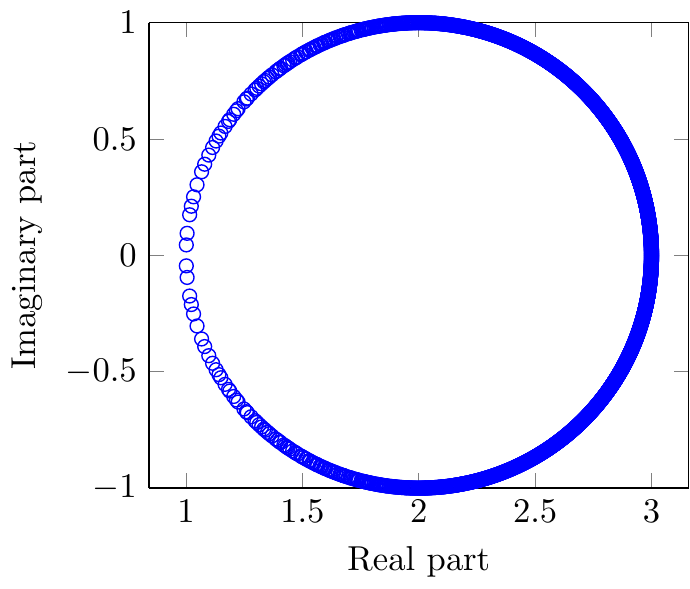}
 \includegraphics[]{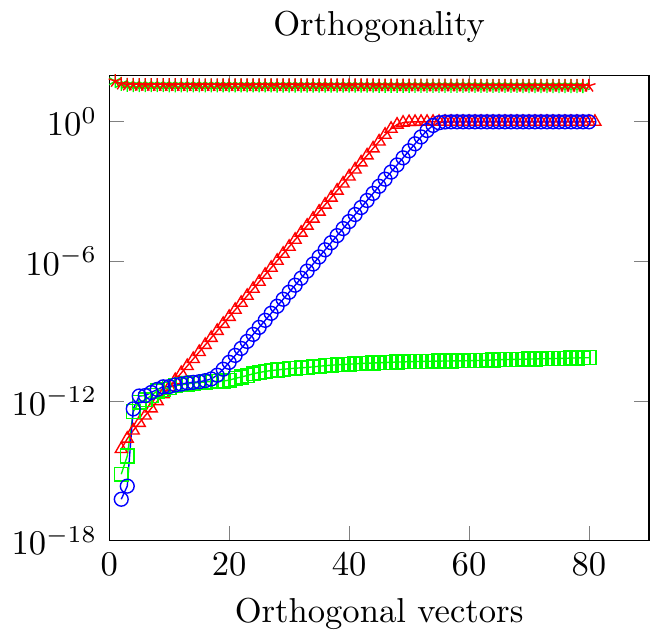}
\caption{The left plot depicts the eigenvalues of the Neuberger operator $A = \rho I +
  \gamma\, \text{sign}(Q)$, with $\rho = 2$ and $\gamma=1$. The right plot shows the accuracy and
  (progressive) residuals
  for classical Arnoldi, Barth-Manteuffel, and the fast Arnoldi algorithms. For the legend we refer to Figure~\ref{fig:part_circle}.}
\label{fig:qcd}
\end{figure}




\subsection{Departure from orthogonality} \,\, Beckermann \& Reichel \cite{BeckReich} proposed a Krylov subspace method for solving a linear system in which the coefficient matrix is nearly
 Hermitian. Their method, based on a short recurrence for generating an orthonormal Krylov basis, is better known as \textit{Progressive GMRES}, shortly
 named PGMRES.
 For nearly Hermitian matrices, this short recurrence coincides with the recurrence relation \eqref{eq:8} described above.

 However, Embree, et al., \cite{embree} showed how in certain cases the PGMRES method exhibits an instability which finds its origin in the loss of orthogonality of the computed
 Arnoldi vectors. A specific class of examples is described and the corresponding
 departure from orthogonality is shown when using the recurrence relation \eqref{eq:8}. In
 the forthcoming experiments we have used the algorithm for nearly Hermitian matrices as
 presented in Section~\ref{subsec:nh}.

 This class of matrices is of the form
 \begin{eqnarray}A = \left(\begin{array}{ccc}
             \Lambda_- &  &  \\
              & \Lambda_+ &  \\
              & & Z
             \end{array}
 \right), \label{eq:classherm} \end{eqnarray}
where \[\Lambda_- = \text{diag}(\lambda_1, \ldots, \lambda_p), \qquad
\Lambda_+ = \text{diag}(\lambda_{p+1}, \ldots, \lambda_{n-2}), \qquad
Z = \left(\begin{array}{cc}
           0 & \gamma \\
           -\gamma & 0
          \end{array}
 \right), \]
with eigenvalues
\begin{enumerate}
 \item $\lambda_1, \ldots, \lambda_p$ uniformly distributed in the interval $[-\beta, -\alpha]$,
 \item $\lambda_{p+1}, \ldots, \lambda_{n-2}$ uniformly distributed in the interval $[\alpha, \beta]$,
 \item $\lambda_{n-1}, \lambda_{n} = \pm \gamma i$.
\end{enumerate}

We take two examples from this class and compare the loss of orthogonality of the recurrence relation \eqref{eq:8}
as predicted in \cite{embree} with
the Barth-Manteuffel algorithm. The orthogonality of the Arnoldi vectors stored in the matrix $V_k$ is depicted in Figure~\ref{fig:herm}.

\begin{figure}[htb]
 \centering
{\begin{tabular}{cc}
\includegraphics[]{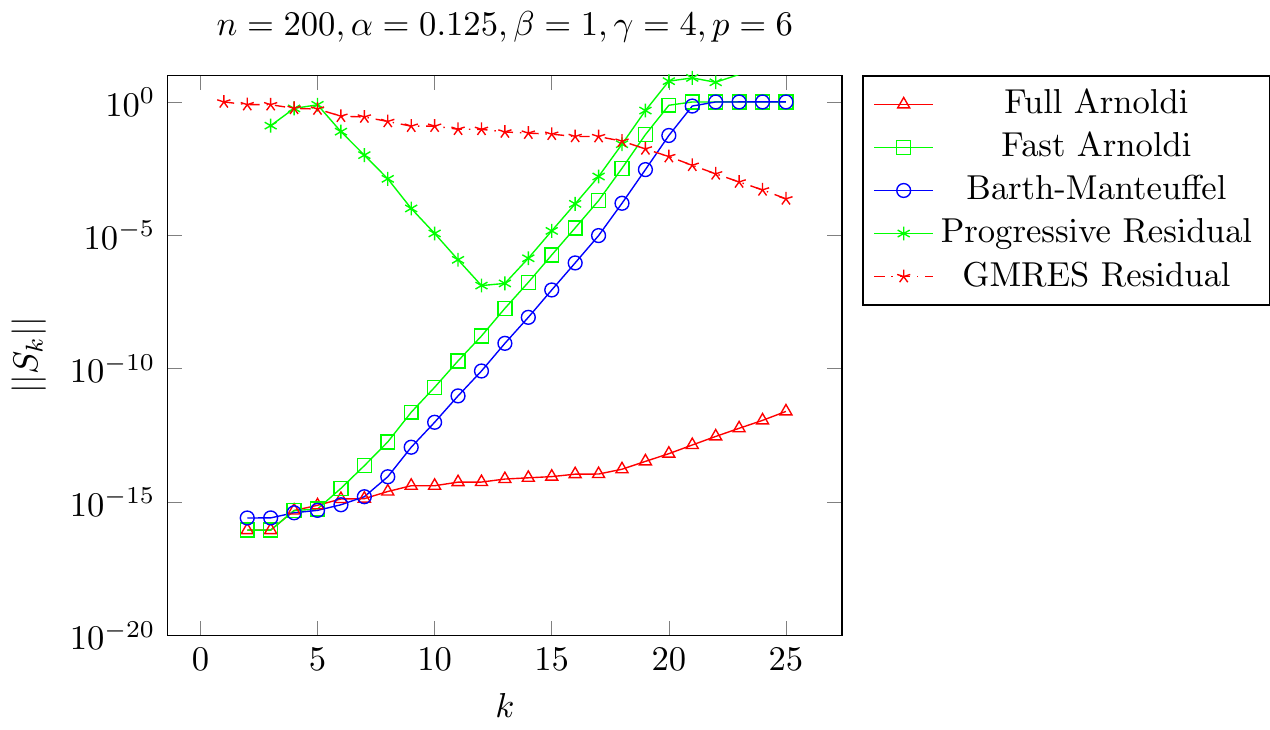}
\end{tabular}}
\caption{
The orthogonality, measured by $\|S_k\|$ as in \cite{paige}, of the computed Arnoldi vectors
  for classical Arnoldi, Barth-Manteuffel, and the fast Arnoldi, for
  a nearly Hermitian matrix of the form \eqref{eq:classherm}.
 The
 progressive residual as well as the residual of GMRES are also depicted, and we see that
 the loss of accuracy in the fast Arnoldi case is related to inaccurate computations in
 the progressive residual vectors.
}
\label{fig:herm}
\end{figure}

\begin{figure}
\centering
\includegraphics[]{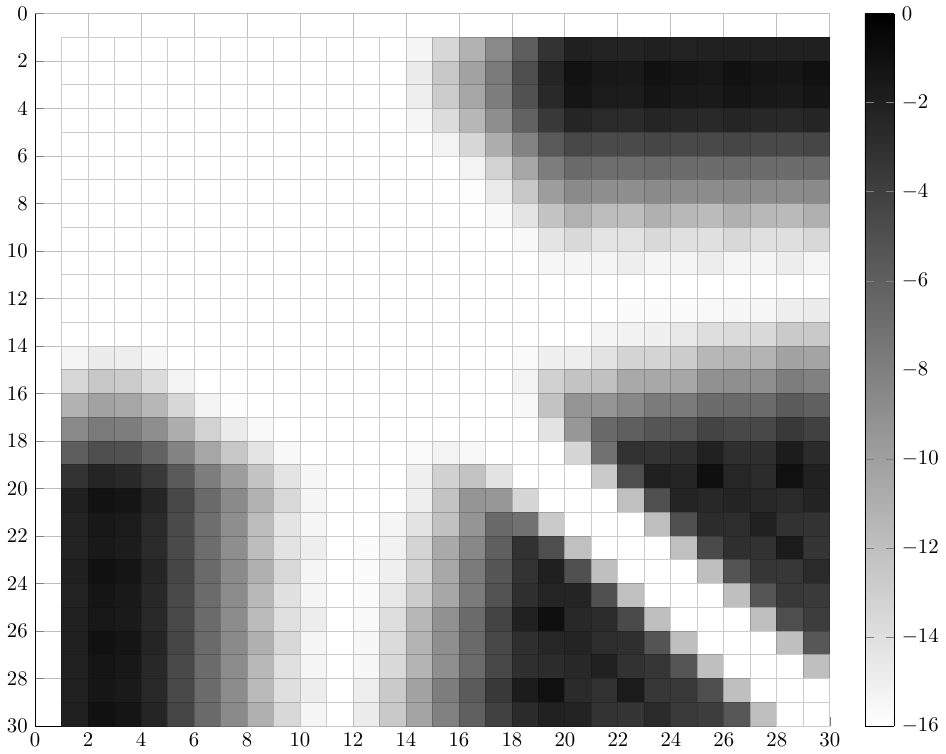}

\caption{The orthogonality between the successively computed vectors via
  progressive GMRES is plotted. The picture shows a log scale of the matrix $V_k^*V_k-I$.}
\label{fig:ortho}

\end{figure}

As seen in Figure~\ref{fig:herm} recurrence relation \eqref{eq:8} gives rise to
significantly less orthogonal vectors than the standard Arnoldi iteration.
However, it may also be observed that the Barth-Manteuffel algorithm suffers from
the same loss of orthogonality as recurrence relation \eqref{eq:8}. We see that the loss
of orthogonality emerges as soon as the progressive residual vectors start to differ
significantly from the actual GMRES residual, explaining the inaccuracies in the computed vectors.
Figure~\ref{fig:ortho} shows the gradual loss of orthogonality
between the vectors. White stands for a perfect orthogonality, black for complete loss,
The colors assigned are, for $10^0$ black (no orthogonality) and for $10^{-16}$ white
(orthogonal up to machine precision).
In Figure~\ref{fig:ortho} we observe that $v_j$ is numerically orthogonal to $v_k$ for $j,k<=14$, $j
\neq k$, and also at later stages for $|j-k|\leq 2$, as expected from the local
reorthogonalization of our algorithm. However, globally, the orthogonality gets quite
quickly lost, as observed already by Embree et al. \cite{embree}, who suggested Schur complement techniques to
tackle this problem.

We can conclude that in this case both Barth-Manteuffel and fast Arnoldi exhibit a fast
and almost
identical loss of orthogonality.

\section{Conclusion}

An economic variant for the Arnoldi algorithm has been established for matrices whose adjoint is a low-rank perturbation of a rational function of the matrix. In the process, some aspects
of the Arnoldi process are described in terms of orthogonal polynomials. This includes an explicit formula for the unitary factor in the $QR$-decomposition of a Hessenberg matrix and
a decay property of the entries of this Hessenberg matrix which is related to the convergence of the GMRES algorithm.
Also, the existence of a progressive GMRES residual formula has been shown, extending the findings of \cite{BeckReich}. Furthermore, comparisons are made with the
algorithm described by Barth and Manteuffel \cite{barthmanteuffel} for matrices whose adjoint is a low-rank perturbation of a rational function of the matrix, both theoretically
and numerically.

\section{Acknowledgements}

Part of this research has been established during a visit at the university of Science and Technology of Lille in 2014.
We thank Bernd Beckermann and Ana Matos for their hospitality.
%
%
%


\end{document}